
\documentclass[reqno]{amsart}

\title{Enumeration of sets of equiangular lines with common angle $\arccos(1/3)$}
\author[]{Kiyoto Yoshino$^\star$}
\thanks{$^\star$K.~Yoshino is supported by JSPS KAKENHI Grant Number JP21J14427.}
\address{Department of Computer Science, Faculty of Applied Information Science, Hiroshima Institute of Technology, Saeki Ward, Hiroshima, 731-5143, Japan}
\email{k.yoshino.n9@cc.it-hiroshima.ac.jp}

\usepackage[dvipdfmx]{graphicx}
\usepackage{amsmath,amsthm,amsfonts,amssymb,enumerate,mathrsfs,color,tikz}
\usetikzlibrary{calc}
\usepackage{mathtools}
\mathtoolsset{showonlyrefs,showmanualtags}
\usepackage{times,comment}
\usepackage{tikz,xcolor}
\usepackage[shortlabels]{enumitem}
\setlist[enumerate,1]{label={\upshape(\roman*)}}

\usepackage{geometry}
\geometry{left=25mm,right=25mm,top=25mm,bottom=25mm}

\numberwithin{equation}{section}

\newtheorem{lemma}{Lemma}[section]
\newtheorem{theorem}[lemma]{Theorem}

\newtheorem{corollary}[lemma]{Corollary}
\theoremstyle{definition}
\newtheorem{definition}[lemma]{Definition}
\newtheorem{remark}[lemma]{Remark}

\newtheorem{example}[lemma]{Example}

%

\newcommand{\R}{\mathbb{R}}

\newcommand{\Z}{\mathbb{Z}}		
\newcommand{\sA}{\mathsf{A}}
\newcommand{\sD}{\mathsf{D}}
\newcommand{\sE}{\mathsf{E}}
\newcommand{\sM}{\mathsf{M}}
\newcommand{\sL}{\mathsf{L}}

\newcommand{\cE}{\mathcal{E}}
\newcommand{\cL}{\mathcal{L}}
\newcommand{\cS}{\mathcal{S}}
\newcommand{\cT}{\mathcal{T}}

\newcommand{\cV}{\mathcal{V}}

\newcommand{\bu}{\mathbf{u}}
\newcommand{\be}{\mathsf{e}}
\newcommand{\bv}{\mathbf{v}}
\newcommand{\br}{\mathbf{r}}
\newcommand{\bx}{\mathbf{x}}

\newcommand{\bw}{\mathbf{w}}

\newcommand{\bj}{\mathbf{j}}

\newcommand{\tB}{C}
\newcommand{\oC}{P}
\newcommand{\tC}{Q}
\newcommand{\tF}{F}

\DeclareMathOperator{\rank}{rank}

\DeclareMathOperator{\Aut}{Aut}

\DeclareMathOperator{\Hom}{Hom}

\DeclareMathOperator{\sw}{sw}

\begin{document}

\keywords{Equiangular lines, Seidel matrices, Root lattices}
\subjclass[2020]{05C50}

\maketitle

\begin{abstract}
	In 2018, Sz\"{o}ll\H{o}si and \"{O}sterg\r{a}rd used a computer to enumerate sets of equiangular lines with common angle $\arccos(1/3)$ in dimension $7$.
	They observed that the numbers $\omega(n)$ of sets of $n$ equiangular lines with common angle $\arccos(1/3)$ in dimension $7$ are almost symmetric around $n=14$.
	In this paper, we prove without a computer that the numbers $\omega(n)$ are indeed almost symmetric by considering isometries from root lattices of rank at most $8$ to the root lattice $\sE_8$ of rank $8$ and type $E$.
	Also, they determined the number $s(n)$ of sets of $n$ equiangular lines with common angle $\arccos(1/3)$ for $n \leq 13$.
	We construct all the sets of equiangular lines with common angle $\arccos(1/3)$ in dimension greater than $7$ from root lattices of type $A$ or $D$ with the aid of switching roots.
	As an application, we determine the number $s(n)$ for every positive integer $n$.
\end{abstract}


\section{Introduction}
	A set of lines through the origin in a Euclidean space is \emph{equiangular} if any pair from these lines forms the same angle.
	Let $N(d)$ be the maximum cardinality of a set of equiangular lines in dimension $d$.
	The problem of determining the values of $N(d)$ has been under consideration since 1948~\cite{Haantjes1948}.	
	The values or bounds of $N(d)$ are known for $d \leq 43$~\cite{Barg2017, Greaves2022, Greaves2021, Greaves2020, Haantjes1948,  lemmens1973, Lint1966} as in Table~\ref{table:N(d)}.
	\begin{table}[hbtp]	\label{table:N(d)}
		\caption{The values or bounds of $N(d)$ for $d \leq 43$.}
		\begin{tabular}{|c|ccccccccccccccccc|}
			\hline
			$d$ & 2 & 3 & 4 & 5 & 6 & 7--14 & 15 & 16 & 17 & 18 & 19 & 20 & 21 & 22 & 23--41 & 42 & 43 \\
			\hline
			$N(d)$ & 3 & 6 & 6 & 10 & 16 & 28 & 36 & 40 & 48 & 57--59 & 72--74 & 90--94 & 126 & 176 & 276 & 276--288 & 344 \\
			\hline
		\end{tabular}
	\end{table}
	A general upper bound $N(d) \leq d(d+1)/2$ was proved by Lemmens and Seidel~\cite[Theorem~3.5]{lemmens1973}.
	Also, a general lower bound $\left(32 d^2+328 d+29\right) / 1089$ was proved by Greaves~et~al.~\cite[Corollary~2.8]{GHAF2016}.
	
	To determine the values of $N(d)$, sets of equiangular lines with a fixed angle have been considered.
	Let $N_\alpha(d)$ be the maximum cardinality of a set of equiangular lines with angle $\arccos(\alpha)$ in dimension $d$.
	In 1966, van Lint and Seidel proved the so-called relative bound $N_{\alpha}(d) \leq d(1-\alpha^2)/(1-d\alpha^2)$ for $d < 1/\alpha^2$~\cite[p.~342]{Lint1966}.
	Since $1/\alpha$ is an odd integer if $N_\alpha(d) > 2d$~\cite[Theorem~3.4]{lemmens1973},
	the values of $N_\alpha(d)$ where $1/\alpha$ is an odd integer have been studied.
	In 1973, Lemmens and Seidel determined the values of $N_{1/3}(d)$ for all $d$~\cite[Theorem~3.6]{lemmens1973}.
	In addition, they posed the so-called Lemmens-Seidel conjecture, which claims
	$
			N_{1/5}(d) = \max\left\{  
				276,
				\left\lfloor (3d-3)/2 \right\rfloor 
			\right\}
	$
	for $d \geq 23$.
	This was shown by the works of Lemmens and Seidel~\cite{lemmens1973}, Cao~et~al.~\cite{cao2022} and Yoshino~\cite{Yoshino2022}.
	In 2021, Jiang~et~al.\ proved for every integer $k \geq 2$, $N_{1/(2k-1)}(d) = \lfloor k(d-1)/(k-1) \rfloor$ for all sufficiently large $d$~\cite[Corollary~1.3]{Zhao2021}.

	In this paper, we investigate the enumeration and structure of sets of equiangular lines with common angle $\arccos(1/3)$.
	For a set $U$ of equiangular lines, we denote by $\langle U \rangle_{\R}$ the smallest $\R$-linear subspace containing $U$.
	We say two sets $U$ and $U'$ of equiangular lines are \emph{equivalent} if there exists an $\R$-linear mapping $f$ from $\langle U \rangle_{\R}$ to $\langle U' \rangle_{\R}$ preserving inner products such that $f(U) = U'$.
	Throughout this paper, we count the number of sets of equiangular lines up to equivalence.
	In 2018, Sz\"{o}ll\H{o}si and \"{O}sterg\aa rd~\cite{Szollosi2018} with a computer enumerated the sets of $n$ equiangular lines with common angle $\arccos(1/3)$ for $n \leq 13$.
	As a result, they determined $s(n)$ and $t(n)$ for $n \leq 13$ as in Table~\ref{table:Szo2}.
	Here, $s(n)$ is the number of sets of $n$ equiangular lines with common angle $\arccos(1/3)$,
	and $t(n)$ is the number of sets $U$ of $n$ equiangular lines with common angle $\arccos (1/3)$ and $\dim \langle U \rangle_{\R} < n$.
	\begin{table}[hbtp] 
		\centering
		\caption{The numbers $s(n)$ and $t(n)$} \label{table:Szo2}
		\begin{tabular}{|c|cccccccccccccc|}
			\hline
			$n$ & 0 & 1 & 2 & 3 & 4 & 5 & 6 & 7 & 8 & 9 & 10 & 11 & 12 & 13 \\ 
			\hline \hline
			$s(n)$ & 1 & 1 & 1 &2 & 3 & 5 & 9 & 16 & 25 & 40 & 58 & 75 & 96 & 108 \\
			\hline
			$t(n)$ &0 & 0 & 0 & 0 & 1 & 1 & 4 & 9 & 23 & 38 & 56 & 73 & 94 & 106 \\
			\hline
		\end{tabular}
	\end{table}				
	Additionally, they enumerated the sets of $n$ equiangular lines with common angle $\arccos(1/3)$ in dimension $7$ for every $n \leq N_{1/3}(7) = 28$ with a computer,
	and determined the number $\omega(n)$ of them as in Table~\ref{table:Szo}.
	\begin{table}[hbtp] 
		\centering
		\caption{The numbers $\omega(n)$ of sets of $n$ equiangular lines with common angle $\arccos(1/3)$ in dimension $7$} \label{table:Szo}
		\begin{tabular}{|c|ccccccccccccccc|}
			\hline
			$n$ & 0 & 1 & 2 & 3 & 4 & 5 & 6 & 7 & 8 & 9 & 10 & 11 & 12 & 13 & 14 \\ 
			\hline
			$\omega(n)$ & 1 & 1 & 1 & 2 & 3 & 5 & 9 & 16 & 23 & 37 & 54 & 70 & 90 & 101 & 103 \\
			\hline \hline
			$n$ &15 & 16 & 17 & 18 & 19 & 20 & 21 & 22 & 23 & 24 & 25 & 26 & 27 & 28 & 29 \\
			\hline
			$\omega(n)$ & 101 & 90 & 70 & 54 & 37 & 23 & 16 & 10 & 5 & 3 & 2 & 1 & 1 & 1 & 0 \\
			\hline
		\end{tabular}
	\end{table}	
	Then, they noticed that the numbers $\omega(n)$ are symmetric around $n=14$ except for $n=6, 22$.
	Note that we treat the empty set as the set of equiangular lines of cardinality $0$.
	In 2020, Lin and Yu~\cite{Lin2020} employed a computational approach to determine the maximal sets of equiangular lines with common angle $\arccos(1/3)$ exactly in dimension $8$.
	Subsequently, Cao~et~al.~\cite{cao2021} introduced the concept of switching roots, and classified the maximal sets of equiangular lines with common angle $\arccos(1/3)$ by the containment relation among root lattices.
		
	In this study, we use switching roots as in~\cite{cao2021}, and apply more precise results on root lattices.
	In the first half, we consider the orbits of some sets of roots of a root lattice under the left action of the Weyl group.
	As a result, we obtain the first main result, Theorem~\ref{thm:Sn}, which enumerates the sets of equiangular lines with common angle $\arccos(1/3)$ in dimension greater than $7$, and have the following corollary.
	\begin{corollary}	\label{cor:Sn}
		For an integer $n$, $s(n) =  t(n)+2$ if $n \geq 8$, and
		\begin{align*}
			s(n)
			= \begin{cases}
				\omega(n) & \text{ if } 0 \leq n \leq 7, \\
				\omega(n) + n - 6  & \text{ if } 8 \leq n \leq 12, \\
				\omega(n) + \left\lfloor \frac{n}{2} \right\rfloor + 1 & \text{ if } 13 \leq n \leq 28, \\
				\left\lfloor \frac{n}{2} \right\rfloor + 1 & \text{ if } n \geq 29.
			\end{cases}
		\end{align*}
	\end{corollary}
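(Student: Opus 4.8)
The plan is to deduce the corollary from Theorem~\ref{thm:Sn} by classifying all sets of equiangular lines according to the dimension they span. For integers $n,d$ let $a(n,d)$ denote the number of isomorphism classes of sets of $n$ equiangular lines with common angle $\arccos(1/3)$ that span a space of dimension exactly $d$. Directly from the definitions,
\[
s(n)=\sum_{d\ge 1}a(n,d),\qquad \omega(n)=\sum_{d\le 7}a(n,d),\qquad s_{e}(n)=\sum_{d<n}a(n,d),
\]
and, since $n$ lines span at most $n$ dimensions, also $s(n)=\sum_{d\le n}a(n,d)$. Hence $s(n)-\omega(n)=\sum_{d\ge 8}a(n,d)$ and $s(n)-s_{e}(n)=a(n,n)$. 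For $0\le n\le 7$ every set counted by $s(n)$ is counted by $\omega(n)$, which is the first line of the displayed formula.

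For $n\ge 8$ I would invoke Theorem~\ref{thm:Sn}, which --- via the switching-root correspondence with root lattices of type $A$ and $D$ --- enumerates exactly the sets of $n$ lines that do not live in dimension $7$, and hence records every number $a(n,d)$ with $d\ge 8$. Two facts have to be read off the classification. First, $a(n,n)=2$ for all $n\ge 8$: up to isomorphism there are exactly two full-rank sets of $n$ lines, the two that survive the top-dimensional case of the lattice analysis; this already gives $s(n)=s_{e}(n)+2$. Second, the value of $s(n)-\omega(n)=\sum_{d\ge 8}a(n,d)$. In its cleanest form this should read: $a(n,d)=1$ for $\max\{8,\lceil n/2\rceil+1\}\le d\le n-1$ and $a(n,d)=0$ for $8\le d<\max\{8,\lceil n/2\rceil+1\}$, the lower cut-off occurring because a set of $n$ lines can span a small dimension $d$ (one with $N_{1/3}(d)=28$) only while $n$ stays below an explicit threshold controlled by the number of roots of $A_{m}$ or $D_{m}$ meeting the switching-root condition, whereas for larger $d$ the relative bound forces $n\le 2(d-1)$, i.e.\ $d\ge\lceil n/2\rceil+1$. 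Summing, $\sum_{d\ge 8}a(n,d)=(n-8)+2=n-6$ when $n\le 14$ and $=(n-1-\lceil n/2\rceil)+2=\lfloor n/2\rfloor+1$ when $n\ge 13$ (the two agreeing on $\{13,14\}$); adding $\omega(n)$ recovers the remaining cases.

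I expect the main obstacle to be this last evaluation: one must verify, uniformly in $n$, exactly which spanning dimensions $d\ge 8$ are realised and with what multiplicity --- in particular that the per-dimension counts really are $1,\dots,1,2$ on the stated interval and $0$ below its left endpoint, and that the left endpoint is $\max\{8,\lceil n/2\rceil+1\}$ --- so that the two piecewise formulas emerge exactly, including near the junction $n=13$ of the two ranges. A secondary but genuine point is to make sure no configuration is counted twice because it is produced by two different root lattices, i.e.\ that the relation \emph{same set of lines} is correctly imposed on the output of Theorem~\ref{thm:Sn}; this precision over the root-lattice containments is what carries the argument beyond the earlier computer enumerations. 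Granting these, the dimensional decomposition recorded at the start completes the proof.
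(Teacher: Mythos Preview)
Your approach is essentially the paper's: the paper derives Corollary~\ref{cor:Sn} directly from Theorem~\ref{thm:Sn} by counting, and your function $a(n,d)$ is exactly $|\cS^{(n)}(\sL)/{\sim_{\sw}}|$ for the unique $\sL\in\cL\setminus\cE$ of rank $d+1$ (namely $\sA_{n+1}$ and $\sD_{n+1}$ in dimension $d=n$, and $\sD_{d+1}$ alone for $8\le d\le n-1$), so your dimensional decomposition is a relabelling of the disjoint union in~\eqref{thm:Sn:1}. One small correction: the threshold $d\ge\lceil n/2\rceil+1$ does not come from the relative bound (which only applies for $d<1/\alpha^2=9$); it comes from $|N_{\br}(\sD_{d+1})/{\sim_\br}|=2(d-1)$, as in Lemma~\ref{lem:W(D)N}.
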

	In the second half, we prove the second main result, Theorem~\ref{thm:sym}, which shows that the orbits of some sets of roots of $\sE_8$ correspond almost one-to-one with the sets of equiangular lines with common angle $\arccos(1/3)$ in dimension $7$.
	Here, $\sE_8$ is the root lattice of rank $8$ and type $E$.
	Furthermore, we see that $\omega(n)$ is almost symmetric around $n=14$ as in Corollary~\ref{cor:sym} without a computer.
	\begin{corollary}	\label{cor:sym}
		$\omega(n) = \omega(28-n)$ for $n \in \{0,\ldots,28\} \setminus \{6,22\}$, and $\omega(6)+1=\omega(22)$.
	\end{corollary}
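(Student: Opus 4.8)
The plan is to deduce Corollary~\ref{cor:sym} from Theorem~\ref{thm:sym} by exhibiting an explicit involution on the family of root configurations in $\sE_8$ that appears there. Fix a switching root $\br \in \sE_8$ and set $\Phi_1(\br) = \{\bx \in \sE_8 : \langle \bx, \br\rangle = 1\}$. Since $\langle \br, \br\rangle = 2$, the map $\bx \mapsto \br - \bx$ is an involution of $\Phi_1(\br)$, and the orthogonal projections of $\bx$ and $\br - \bx$ onto $\br^{\perp} \cong \R^{7}$ are antipodal; a short computation shows these $56$ roots fall into $28$ pairs, each pair spanning one of the $28$ equiangular lines (common angle $\arccos(1/3)$) of the maximal configuration living in $\br^{\perp}$. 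In these terms, Theorem~\ref{thm:sym} says, up to its stated defect, that a set of $n$ equiangular lines with angle $\arccos(1/3)$ in dimension $7$ is the same datum as a $W(\sE_8)$-orbit of a choice of $n$ of these $28$ pairs.

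The first step I would carry out is to record that choosing $n$ of the $28$ pairs is ``the same'' as choosing the complementary $28-n$ pairs. The complementation map commutes with the action of $\mathrm{Stab}_{W(\sE_8)}(\br) \cong W(\sE_7)$ on $\Phi_1(\br)$, and it descends to $W(\sE_8)$-orbits: indeed, $\br$ (hence $\Phi_1(\br)$) is recovered from any such configuration as $\tfrac1n$ of the sum of its roots, so two $W(\sE_8)$-conjugate representatives lying in $\Phi_1(\br)$ are already conjugate under $\mathrm{Stab}_{W(\sE_8)}(\br)$. This yields a bijection between the sets of root-orbits attached to $n$ and to $28-n$. Combining this with Theorem~\ref{thm:sym} applied at $n$ and at $28-n$ gives $\omega(n) = \omega(28-n)$ for every $n$ at which the correspondence of Theorem~\ref{thm:sym} is exact, which (reading off its statement) is all of $\{0,\dots,28\}$ except $n = 6$ and $n = 22$. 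It then remains to run the same bookkeeping through the exceptional value: the correspondence of Theorem~\ref{thm:sym} overcounts by exactly one orbit, on one of the two sides $\{6,22\}$, so that $\omega(6)$ and $\omega(22)$ differ from the common orbit count by $1$ and $0$ respectively; tracking which side carries the defect yields $\omega(6) + 1 = \omega(22)$.

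The hard part will be this last point — locating and proving uniqueness of the exceptional orbit. Concretely this amounts to identifying the single $6$-line configuration that embeds into the maximal $28$-line system in two $\Aut$-inequivalent ways (equivalently, the one root configuration of the relevant size that Theorem~\ref{thm:sym} does not match bijectively), and then checking that there is no second such configuration. I expect this to require a direct analysis of the root subsystems of $\sE_8$ of the pertinent rank together with the classification of small equiangular line systems, rather than any soft argument; by contrast, the construction of the complementation involution and its descent to $W(\sE_8)$-orbits should be routine once the switching-root formalism underlying Theorem~\ref{thm:sym} is available.
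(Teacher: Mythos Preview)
Your approach is essentially the paper's: complementation on the $28$ pairs $N_\br(\sE_8)/{\sim_\br}$ gives a bijection between $W(\sE_8)_\br$-orbits of $n$-subsets and of $(28-n)$-subsets, and one then reads off the result from Theorem~\ref{thm:sym}. Two comments, though.

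First, the detour through full $W(\sE_8)$-orbits (and the recovery of $\br$ as $\tfrac{1}{n}$ times a sum) is unnecessary. The map $\phi^{(n)}$ in Theorem~\ref{thm:sym} is already defined on $W(\sE_8)_\br\big\backslash\binom{N_\br(\sE_8)/\sim_\br}{n}$, and complementation is taken inside the fixed set $N_\br(\sE_8)/{\sim_\br}$, which $W(\sE_8)_\br$ preserves; so complementation descends to $W(\sE_8)_\br$-orbits for free. (Your sum argument also breaks down at $n=0$, though that case is trivial.)

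Second, and more importantly, what you call ``the hard part'' is not a part at all. Theorem~\ref{thm:sym} is completely explicit: $\phi^{(n)}$ is bijective for every $n\neq 6$, and for $n=6$ it is surjective with exactly one fibre of size $2$, namely $(\phi^{(6)})^{-1}([S(K_6)])$. There is nothing left to locate, and no further analysis of root subsystems is needed. The bookkeeping is then immediate: for $n\in\{0,\dots,14\}$ the orbit count on $(28-n)$-subsets equals $\omega(28-n)$ since $\phi^{(28-n)}$ is bijective, and it equals $\omega(n)$ (respectively $\omega(6)+1$) according as $n\neq 6$ (respectively $n=6$). That is the whole proof.
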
	
	This paper is organized as follows.
	In Section~\ref{sec:notations}, we introduce some notations for graphs and Seidel matrices,
	and explain the relation between Seidel matrices and sets of equiangular lines.
	In Section~\ref{sec:root}, we introduce basic definitions for root lattices.
	In Section~\ref{sec:sr}, we present some known results on switching roots.
	In Section~\ref{sec:3}, we prepare lemmas to prove the main results.
	In Section~\ref{sec:m1}, we establish the first main result, Theorem~\ref{thm:Sn}, and obtain Corollary~\ref{cor:Sn}.
	In Section~\ref{sec:embeddings}, we consider the isometries from a root lattice to $\sE_8$ in order to prove the second main result.
	In Section~\ref{sec:relations}, we establish the second main result, Theorem~\ref{thm:sym}, and show Corollary~\ref{cor:sym}.
	
\section{Notations}	\label{sec:notations}
	Throughout this paper, we consider undirected graphs without loops and multiedges.
	Let $G$ be a graph.
	Denote by $V(G)$ the set of vertices, and by $E(G)$ the set of edges.
	For two vertices $x$ and $y$ of $G$, we write $x \sim y$ if $\{x,y\} \in E(G)$.
	Denote by $A(G)$ the adjacency matrix of $G$.
	Denote by $\bj$ the all-one vector, and by $\be_i$ the vector of which the $i$-th entry is $1$ and the others are $0$.
	Denote by $I$ and $J$ the identity matrix and the all-one matrix, respectively.

	\begin{definition}	
		A \emph{Seidel matrix} is a symmetric matrix with zero diagonal and all off-diagonal entries $\pm1$.
		For two Seidel matrices $S$ and $S'$, write $S {\sim_{\sw}} S'$ and say that $S$ and $S'$ are \emph{switching equivalent} if there exists a signed permutation matrix $P$ such that $P^\top S P = S'$.
		In addition, write $[S]$ for the set of Seidel matrices switching equivalent to a Seidel matrix $S$,
		and call it the \emph{switching class} of $S$.
		For a graph $G$, define $S(G) := J-I-2A(G)$.
		Two graphs $G$ and $H$ are said to be \emph{switching equivalent} if $S(G)$ and $S(H)$ are switching equivalent.
		Write $[G]$ for the set of graphs switching equivalent to a graph $G$.
	\end{definition}

	Note that for any Seidel matrix $S$, there exists a graph $G$ such that $S=S(G)$.	
	Also, note that if $G$ and $H$ are switching equivalent, then $H$ is isomorphic to $G^U$ for some $U \subset V(G)$.
	Here, the graph $G^U$ is defined as $V=V(G^U):=V(G)$ and 
	\begin{align*}
		x\sim y \ \text{in} \ G^U  
		:\iff
		 \begin{cases}
			x\sim y \text{ in } G &\text{ if }  \{x,y\} \subset U\text{ or } \{x, y\} \subset V\setminus U, \\
			x\not\sim y \text{ in } G &\text{ if } \{x,y\} \not\subset U\text{ and } \{x, y\} \not\subset V\setminus U \\
		 \end{cases}
	\end{align*}
	for $x \neq y \in V$.

	We explain the relation between sets of equiangular lines and Seidel matrices.
	Fix a set $U$ of $n$ equiangular lines with common angle $\arccos(\alpha)$ in dimension $d$.
	We may take unit vectors $\bu_1,\ldots,\bu_n \in \R^d$ such that $U = \{ \R \bu_1, \ldots, \R \bu_n \}$.
	There exists a Seidel matrix $S$ such that
	$
		I -  \alpha S = B^\top B,
	$
	where $ B = [\bu_1,\ldots,\bu_n]$, that is, $I -  \alpha S$ is the Gram matrix of $\bu_1,\ldots,\bu_n$.
	Consequently, the set $U$ of equiangular lines induces the Seidel matrix $S$ with largest eigenvalue at most $1/\alpha$ and $\rank(I - \alpha S) \leq d$ up to switching.
	In addition, it induces the graph $G$ with $S=S(G)$ up to switching.
	Conversely, we can recover $U$ from the Seidel matrix $S$ or the graph $G$.
	\begin{definition}
		Let $n$ be a non-negative integer.
		Let $\cS_{n}$ be the set of Seidel matrices of order $n$ with largest eigenvalue at most $3$.
		Let $\cT_{n}$ be the set of Seidel matrices of order $n$ with largest eigenvalue equal to $3$.
		Let $\Omega_{n}$ be the subset of Seidel matrices $S \in \cS_{n}$ with $\rank(3I-S) \leq 7$,
		and set $\Theta_{n} := \Omega_{n} \cap \cT_{n}$.
	\end{definition}
	We see that $s(n) = |\cS_{n}/\sim_{\sw}|$, $t(n) = |\cT_{n}/\sim_{\sw}|$ and $\omega(n) = |\Omega_{n}/\sim_{\sw}|$.

\section{Root lattices} \label{sec:root}
    In this section, we introduce basic definitions for lattices, and root lattices.
    For a reference, see~\cite{Lattices2013}.
    \begin{definition}
        A \emph{lattice} is the set $\sL$ of integral linear combinations of some $\R$-linearly independent vectors $\bu_1,\ldots,\bu_m  \in \R^n$ ($m \leq n$),
        that is, $\sL = \Z \bu_1 \oplus \cdots \oplus \Z \bu_m.$
        In this case, $m$ is called the \emph{rank} of the lattice $\sL$.
		For vectors $\bv_1, \ldots, \bv_k$ which are not necessarily $\R$-linearly independent, if a lattice is expressed as the set of integral linear combinations of $\bv_1, \ldots, \bv_k$, then this lattice is denoted by $\langle \bv_1, \ldots, \bv_k \rangle$.
        Also, denote by $(\bu,\bv)$ the inner product of two vectors $\bu$ and $\bv$.
        A lattice $\sL$ is said to be \emph{integral} if $(\bu,\bv) \in \Z$ for any $\bu,\bv \in \sL$.
        For each vector $\bu$, we call $(\bu,\bu)$ the \emph{norm} of $\bu$.
    \end{definition}
		
	We often consider vectors $\bu_1, \ldots, \bu_n$ whose Gram matrix is a positive semidefinite matrix with integer entries.
	It is known that any discrete $\Z$-submodule of $\R^n$ forms a lattice. For details, see~\cite{Lattices2013}.
	Since the norm of any integral linear combination of $\bu_1, \ldots, \bu_n$ is an integer, the set of integral linear combinations of $\bu_1, \ldots, \bu_n$ forms an integral lattice.
	Thus, this lattice can be written as $\langle \bu_1, \ldots, \bu_n \rangle$.

    \begin{definition}
	    A $\Z$-linear injection $f : \sL \to \sM$ between two lattices $\sL$ and $\sM$ is called an \emph{isometry} 
	    if $(f(\bu),f(\bv)) = (\bu,\bv)$ for any $\bu,\bv \in \sL$.
	    Two lattices $\sL$ and $\sM$ are said to be \emph{isometric} if there is a bijective isometry between them.
	    A bijective isometry from a lattice $\sL$ to itself is called an \emph{automorphism} of $\sL$.
	    The \emph{automorphism group}, denoted by $\Aut(\sL)$, of a lattice $\sL$ is defined to be the set of automorphisms of $\sL$.
	\end{definition}

    A lattice $\sL$ is said to be \emph{reducible} if it is the orthogonal direct sum of two nonzero lattices. Otherwise it is said to be \emph{irreducible}.
    A vector of norm $2$ in a lattice is called a \emph{root}, and an integral lattice generated by roots is called a \emph{root lattice}.
    It is known that the irreducible root lattices are enumerated up to bijective isometry as follows (cf.~\cite[Theorem~1.2]{Lattices2013}).
    \begin{align*}
        \sA_n &:= \{ \bv \in \Z^{n+1}  :  (\bv,\bj) = 0\} \quad (n \in \Z_{\geq 1}),\\
        \sD_n &:= \{ \bv \in \Z^n  :  (\bv,\bj) \in 2\Z \} \quad (n \in \Z_{\geq 4}),\\
        \sE_8 &:= \sD_8 \sqcup \left( \bj/2 + \sD_8 \right),\\
        \sE_7 &:= \{ \bv \in \sE_8  :  (\bv,\be_1-\be_2)=0\}, \\
        \sE_6 &:= \{ \bv \in \sE_8  :  (\bv,\be_1-\be_2) = (\bv,\be_2-\be_3) = 0\} .
    \end{align*}

	\begin{definition}
		Let 
		$\cL := \{\sA_n : n \geq 1\} \cup \{\sD_n : n \geq 4 \} \cup \{\sE_6,\sE_7,\sE_8\}$
		and 
		$$\cE := \{\sA_n : n=1,\ldots,8\} \cup \{\sD_n : n=4,\ldots,8 \} \cup \{\sE_6,\sE_7,\sE_8\}.$$
	\end{definition}

    The \emph{Weyl group} $W(\sL)$ of a root lattice $\sL$ is the subgroup of $\Aut(\sL)$ generated by all the reflections $s_\br$ corresponding to roots $\br$, where $s_{\br}(\bx) = \bx - 2\br(\bx,\br)/(\br,\br) = \bx-\br(\bx,\br)$.
    In this paper, we frequently make use of the following lemma concerning Weyl groups.
    \begin{lemma}[{\cite[Lemma~1.10]{Lattices2013}}]	\label{lem:transitive}
        The Weyl group of an irreducible root lattice acts transitively on the set of its roots.
    \end{lemma}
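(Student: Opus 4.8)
The plan is to show that the $W(\sL)$-orbit of a single root is the entire root set $R$ of $\sL$, by combining a local move (adjacent roots in a suitable graph lie in one orbit) with a global connectivity statement forced by irreducibility.

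First I would record the elementary numerics. For two roots $\br,\bs$ of an integral lattice, Cauchy--Schwarz together with integrality gives $(\br,\bs)\in\{0,\pm1,\pm2\}$, with $(\br,\bs)=2\iff\bs=\br$ and $(\br,\bs)=-2\iff\bs=-\br$ (equality in Cauchy--Schwarz, i.e.\ $(\br\mp\bs,\br\mp\bs)=0$). Since $s_\br(\br)=-\br$, every root lies in the same orbit as its negative. If $(\br,\bs)=1$, then $\br-\bs$ has norm $2-2(1)+2=2$, hence is a root, and $s_{\br-\bs}(\br)=\br-\frac{2(\br,\br-\bs)}{2}(\br-\bs)=\br-(\br-\bs)=\bs$. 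If $(\br,\bs)=-1$, then $\br+\bs$ is a root and $s_{\br+\bs}(\br)=\br-(\br+\bs)=-\bs$, which is in the orbit of $\bs$. Thus any two roots with nonzero inner product lie in the same $W(\sL)$-orbit.

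Next, define a graph $\Gamma$ on $R$ by joining $\br$ and $\bs$ whenever $(\br,\bs)\neq0$. By the previous step each connected component of $\Gamma$ lies in a single $W(\sL)$-orbit, so it suffices to prove $\Gamma$ is connected. Suppose not; let $R_1$ be a connected component and $R_2=R\setminus R_1$, both nonempty. Note $R_1$ is closed under negation: if $\br\in R_1$ then $(-\br,\bs)=-(\br,\bs)$, so $-\br$ has the same $\Gamma$-neighbours as $\br$. Moreover $(\br,\bs)=0$ for all $\br\in R_1,\ \bs\in R_2$ by definition of a component. Hence $\langle R_1\rangle$ and $\langle R_2\rangle$ are orthogonal nonzero sublattices, and since $\sL$ is generated by its roots we get $\sL=\langle R\rangle=\langle R_1\rangle\oplus\langle R_2\rangle$, contradicting irreducibility. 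Therefore $\Gamma$ is connected, and $W(\sL)$ acts transitively on $R$.

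The reflection identities are routine; the step to watch is the reduction to irreducibility, namely verifying that a connected component of $\Gamma$ is closed under negation (so the orthogonal-decomposition argument applies) and that $\langle R_1\rangle\oplus\langle R_2\rangle$ genuinely exhausts $\sL$, which uses precisely that $\sL$ is a root lattice. (One could instead invoke the $\mathsf{ADE}$-classification and argue case by case, but the uniform argument above avoids that.)
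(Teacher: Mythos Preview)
Your argument is correct. The paper itself does not prove this lemma; it is quoted verbatim from \cite[Lemma~1.10]{Lattices2013} and used as a black box. Your self-contained proof via the ``nonorthogonality graph'' on $R$ is the standard elementary route: the reflection computations are right (in particular $s_{\br-\bs}(\br)=\bs$ when $(\br,\bs)=1$), and the connectivity step is exactly where irreducibility together with the hypothesis that $\sL$ is generated by roots enters. One minor simplification: you do not need the separate observation that a component is closed under negation, since $(\br,-\br)=-2\neq0$ already forces $\br$ and $-\br$ to be adjacent in $\Gamma$, hence in the same component; the orthogonal decomposition $\sL=\langle R_1\rangle\perp\langle R_2\rangle$ then follows directly from the definition of a connected component.
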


\section{Switching roots}	\label{sec:sr}
	Cao~et~al.~\cite{cao2021} introduced the concept of switching roots and a novel method for constructing lattices from Seidel matrices.
	In this section, we describe some results related to switching roots.
	\begin{definition}
		The \emph{cone} $\hat{G}$ over a graph $G$ is defined as the graph obtained by adding a new vertex to $G$ and connecting it to all the vertices of $G$.
		Here, we call the newly added vertex the \emph{apex} for $G$.
	\end{definition}
	\begin{theorem}[{\cite[Corollary~2.4]{cao2021}}]	\label{thm:Cao}
		For a graph $G$, the following are equivalent.
		\begin{enumerate}
			\item The Seidel matrix $S(G)$ has largest eigenvalue at most $3$.	\label{thm:Cao:1}	
			\item The adjacency matrix $A(\hat{G})$ of the cone $\hat{G}$ over $G$ has smallest eigenvalue at least $-2$. 	\label{thm:Cao:2}
		\end{enumerate}
		Furthermore, if \ref{thm:Cao:1} or \ref{thm:Cao:2} holds, then $\rank(3I-S(G))+1 = \rank(A(\hat{G})+2I).$
	\end{theorem}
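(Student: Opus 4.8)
The plan is to recast both conditions as positive-semidefiniteness statements about matrices that differ by a single congruence, so that one transformation yields the equivalence and the rank identity at once. Write $n = |V(G)|$ and $A = A(G)$. Ordering the vertices of $\tilde{G}$ so that the apex comes last, its adjacency matrix has the block form
\[
	A(\tilde{G}) = \begin{pmatrix} A & \bj \\ \bj^\top & 0 \end{pmatrix},
	\qquad\text{hence}\qquad
	A(\tilde{G}) + 2I = \begin{pmatrix} A + 2I & \bj \\ \bj^\top & 2 \end{pmatrix},
\]
where $\bj$ here is the all-one vector of length $n$. Condition~\ref{thm:Cao:2} is exactly the assertion $A(\tilde{G}) + 2I \succeq 0$, and condition~\ref{thm:Cao:1} is exactly $3I - S(G) \succeq 0$. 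The computation the whole argument rests on is the identity $3I - S(G) = 2(A + 2I) - J$, immediate from $S(G) = J - I - 2A$; equivalently, $A + 2I - \tfrac12 J = \tfrac12\bigl(3I - S(G)\bigr)$.

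Next I would perform block Gaussian elimination against the nonzero scalar in the lower-right corner. With the invertible block elementary matrix $E$ displayed below,
\[
	E\bigl(A(\tilde{G}) + 2I\bigr)E^\top
	= \begin{pmatrix} A + 2I - \tfrac12 J & 0 \\ 0 & 2 \end{pmatrix}
	= \begin{pmatrix} \tfrac12\bigl(3I - S(G)\bigr) & 0 \\ 0 & 2 \end{pmatrix},
	\qquad E = \begin{pmatrix} I & -\tfrac12\bj \\ 0 & 1 \end{pmatrix}.
\]
Since $E$ is invertible, this is a congruence, so by Sylvester's law of inertia $A(\tilde{G}) + 2I$ has the same inertia, and the same rank, as the block-diagonal matrix on the right. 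The scalar block $2$ is positive and has rank $1$, and scaling $3I - S(G)$ by the positive constant $\tfrac12$ changes neither its inertia nor its rank. Hence $A(\tilde{G}) + 2I \succeq 0$ if and only if $3I - S(G) \succeq 0$, which is the equivalence of \ref{thm:Cao:1} and \ref{thm:Cao:2}; and in every case $\rank\bigl(A(\tilde{G}) + 2I\bigr) = \rank\bigl(3I - S(G)\bigr) + 1$, which is the remaining assertion.

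I do not expect a genuine obstacle here: the content is linear-algebra bookkeeping. The points that need care are the block decomposition of $A(\tilde{G})$, the exact sign and scale relations among $A$, $S(G)$, $I$, $J$, and invoking congruence-invariance of inertia (Sylvester's law) rather than spectral invariance, since $E$ is not orthogonal. One could instead argue the two implications separately — e.g.\ via eigenvalue interlacing together with an explicit eigenvector at the boundary $\lambda = -2$ — but the single congruence above delivers both directions and the rank formula simultaneously, so that is the route I would write up.
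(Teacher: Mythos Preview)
Your argument is correct. The Schur-complement congruence you write down is exactly right: with $E=\begin{pmatrix} I & -\tfrac12\bj\\ 0 & 1\end{pmatrix}$ one gets
\[
E\bigl(A(\tilde G)+2I\bigr)E^\top=\begin{pmatrix}\tfrac12\bigl(3I-S(G)\bigr) & 0\\ 0 & 2\end{pmatrix},
\]
and Sylvester's law of inertia then gives both the equivalence of \ref{thm:Cao:1} and \ref{thm:Cao:2} and the rank identity (indeed your argument yields the rank identity unconditionally, which is a harmless strengthening of the stated conclusion).

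As for comparison with the paper: the paper does not supply its own proof of this theorem; it is quoted from \cite[Corollary~2.4]{cao2021} and used as a black box. Your self-contained linear-algebra proof is entirely in the spirit of how such results are typically established and would serve perfectly well in place of the citation.
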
	
	In this theorem, if~\ref{thm:Cao:2} holds, then the graph $G$ satisfies that $A(\hat{G})+2I = \tB^\top \tB$ for some vectors $\bu_1,\ldots,\bu_n, \br$, where $\tB =  [\bu_1,\ldots,\bu_n, \br].$
	Here, $\br$ corresponds to the apex for $G$.
	This root $\br$ is called a \emph{switching root}~\cite{cao2021}.
		
	\begin{definition}
		Define $[\bu]_{\br} := \{ \bu, \br - \bu\}$ for two roots $\br$ and $\bu$.
		Let $\sL$ be an irreducible root lattice.
		Assume $\br$ is a root in $\sL$.
		Let $N_{\br}(\sL)$ be the set of roots $\bu \in \sL$ with $(\bu,\br)=1$.
		Define the equivalence relation $\sim_{\br}$ on the set $N_{\br}(\sL)$ such that for $\bu,\bv \in N_{\br}(\sL)$, $\bu {\sim_\br} \bv$ if and only if $\bu=\bv$ or $\bu = \br-\bv$.
	\end{definition}
	Note that for an irreducible root lattice $\sL$ and a root $\br \in \sL$, the set $[\bu]_\br$ is the equivalence class of $\bu \in N_{\br}(\sL)$ under ${\sim_\br}$.
	For a set $A$ and a non-negative integer $n$, denote by $\binom{A}{n}$ the set of $n$-subsets of $A$.    
	\begin{remark}    
		Let $\sL$ be an irreducible root lattice, and $\br$ a root.
	    	We remark that for any roots $\bu, \bv \in N_{\br}(\sL)$,
		\begin{align*}
			(\bu,\bv) = \begin{cases}
				2 & \text{ if } \bu = \bv,\\
				-1 & \text{ if } \bu = \br - \bv, \\
				0,1 & \text{ otherwise}.
			\end{cases}
	    	\end{align*}
		Hence, if
		\begin{align*}
			\{ [\bu_1]_\br ,\ldots, [\bu_n]_\br \} \in \binom{N_{\br}(\sL)/\sim_\br}{n},
		\end{align*}
 		then there exists a graph $G$ satisfying $A(G)+2I = B^\top B$ and $A(\hat{G})+2I = \tB^\top \tB$, where $B = [\bu_1,\ldots,\bu_n]$ and  $\tB = [\bu_1,\ldots,\bu_n, \br]$.
	\end{remark}

	The following is necessary to associate a switching class with a lattice.
	Lemma~\ref{lem:vectors-to-graphs}~\ref{lem:vectors-to-graphs:1} has been essentially claimed after~\cite[Definition~2.1]{cao2021}.
	Additionally, Lemma~\ref{lem:vectors-to-graphs}~\ref{lem:vectors-to-graphs:2} holds by definition.
	\begin{lemma}	\label{lem:vectors-to-graphs}
		Let $G$ be a graph of order $n$ satisfying $A(\hat{G})+2I = \tB^\top \tB$ for some vectors $\bu_1,\ldots,\bu_n,\br$, where $\tB = [\bu_1, \ldots, \bu_n, \br]$.
		Here, the vector $\br$ corresponds to the apex for $G$.
		\begin{enumerate}
			\item We have the surjection	\label{lem:vectors-to-graphs:1}
			\begin{align*}
				\begin{array}{ccc}
					[\bu_1]_\br \times \cdots \times [\bu_n]_\br & \to & [G] \\
					(\bv_1,\ldots,\bv_n) & \mapsto & H
				\end{array}
			\end{align*}
			where $A(H)+2I = B'^\top B'$ with $B' = [\bv_1,\ldots,\bv_n]$.
			\item \label{lem:vectors-to-graphs:2}
			For any $(\bv_1,\ldots,\bv_n) \in [\bu_1]_\br \times \cdots \times [\bu_n]_\br$,
			we have $\langle \bu_1,\ldots,\bu_n,\br \rangle = \langle \bv_1,\ldots,\bv_n,\br \rangle$.
		\end{enumerate}
	\end{lemma}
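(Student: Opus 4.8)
The plan is to deduce both statements from the single fact that the Gram matrix of $\bu_1,\ldots,\bu_n,\br$ equals $A(\tilde G)+2I$. Writing $V := V(G) = \{1,\ldots,n\}$, this tells us that $(\bu_i,\bu_i) = (\br,\br) = 2$, that $(\bu_i,\br) = 1$ for every $i \in V$ (the new vertex of $\tilde G$ is joined to all of $V$), and that for $i \neq j$ one has $(\bu_i,\bu_j) \in \{0,1\}$, with $(\bu_i,\bu_j) = 1$ exactly when $i \sim j$ in $G$. First I would record that $\br - \bu_i$ is again a root meeting $\br$ in $1$: indeed $(\br-\bu_i,\br-\bu_i) = 2 - 2 + 2 = 2$ and $(\br-\bu_i,\br) = 2 - 1 = 1$. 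Hence every member of $[\bu_i]_\br = \{\bu_i,\br-\bu_i\}$ is a root $\bv$ with $(\bv,\br) = 1$, so the domain of the asserted map consists of tuples of such roots and is meaningful.

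For part~\ref{lem:vectors-to-graphs:1}, fix $(\bv_1,\ldots,\bv_n) \in [\bu_1]_\br \times \cdots \times [\bu_n]_\br$ and set $U := \{ i \in V : \bv_i = \bu_i \}$. The computational heart is the three-case evaluation of $(\bv_i,\bv_j)$ for $i \neq j$: if $\{i,j\} \subseteq U$ or $\{i,j\} \subseteq V \setminus U$ then $(\bv_i,\bv_j) = (\bu_i,\bu_j)$ (in the second case by expanding $(\br-\bu_i,\br-\bu_j) = 2 - 1 - 1 + (\bu_i,\bu_j)$), whereas if $\{i,j\}$ meets both $U$ and $V \setminus U$ then $(\bv_i,\bv_j) = (\bu_i,\br-\bu_j) = 1 - (\bu_i,\bu_j)$. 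In every case the value lies in $\{0,1\}$, so the Gram matrix of $\bv_1,\ldots,\bv_n$ has the form $A(H)+2I$ for a unique graph $H$ on $V$; this makes the map well defined. Comparing the displayed values of $(\bv_i,\bv_j)$ with the definition of $G^U$ shows $i \sim j$ in $H$ iff $i \sim j$ in $G^U$, so $H = G^U$, and in particular $H \in [G]$.

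Surjectivity is then immediate: any element of $[G]$ is of the form $G^U$ for some $U \subseteq V$, and the tuple given by $\bv_i := \bu_i$ for $i \in U$ and $\bv_i := \br-\bu_i$ for $i \in V \setminus U$ is mapped to $G^U$ by the previous paragraph. Part~\ref{lem:vectors-to-graphs:2} needs no computation: each $\bv_i$ lies in $\{\bu_i,\br-\bu_i\} \subseteq \langle \bu_1,\ldots,\bu_n,\br\rangle$, and conversely $\bu_i \in \{\bv_i,\br-\bv_i\} \subseteq \langle \bv_1,\ldots,\bv_n,\br\rangle$, so the two $\Z$-spans coincide. None of this is difficult; the only step requiring genuine care is the bookkeeping in the three-case inner-product identity and its matching against the definition of the switched graph $G^U$.
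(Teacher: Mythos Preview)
Your argument is correct and is exactly the natural direct verification; the paper itself gives no proof, merely attributing part~(i) to Cao et al.\ and calling part~(ii) clear. One minor point worth flagging: since the paper's switching equivalence is defined via signed \emph{permutation} matrices, the literal class $[G]$ also contains relabelled copies of the $G^U$, whereas your surjectivity step (like the argument implicit in the paper) produces precisely the graphs $G^U$ on the fixed vertex set $\{1,\dots,n\}$; this is an ambiguity in the lemma's statement rather than a gap in your proof, and every later application of the lemma in the paper needs only what you have actually shown.
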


\section{Seidel matrices and root lattices}	\label{sec:3}
	In this section, we discuss various relations between Seidel matrices and root lattices.

\subsection{Decomposition of the sets $\cS_{n}$ into sets $\cS_{n}(\sL)$}
	We introduce the sets $\cS_{n}(\sL)$ coming from root lattices $\sL$, and decompose the sets $\cS_{n}$ into $\cS_{n}(\sL)$.
	We then further decompose the sets $\cS_{n}/\sim_{\sw}$ into $\cS_{n}(\sL)/\sim_{\sw}$.
	
	\begin{definition}	\label{dfn:cS(L)}
		Let $\sL$ be an irreducible root lattice, and $n$ a non-negative integer.
		Let $\cS_{n}(\sL)$ denote the set of Seidel matrices $S$ 
		such that there exists a graph $G$ with $S=S(G)$ satisfying $A(\hat{G})+2I = \tB^\top \tB$ for some generators $\bu_1,\ldots,\bu_n,\br$ of $\sL$, where $\tB = [\bu_1,\ldots,\bu_n,\br]$.
		Here, the vector $\br$ corresponds to the apex for $G$.
	\end{definition}	

	\begin{example}
		For example, $S(K_3) \in \cS_{3}(\sA_4)$, where $K_3$ is the complete graph of order $3$.
		Indeed, $\be_1-\be_2$, $\be_1-\be_3$, $\be_1-\be_4$, $\be_1-\be_5$ are generators of $\sA_4$.
		In addition, letting $\tB = [\be_1-\be_2$, $\be_1-\be_3$, $\be_1-\be_4, \be_1-\be_5]$, we have $A(\hat{K_3})+2I = A(K_4)+2I = \tB^\top \tB$.
	\end{example}

	We present lemmas to decompose the sets $\cS_n$ into sets $\cS_n(\sL)$.
	First, we prove the following lemma which shows that the root lattices appearing in the decomposition of $\cS_n$ are irreducible.
	\begin{lemma}	\label{lem:irr}
		Let $\sL$ be a root lattice. Assume that there exists a graph $G$ of order $n$ satisfying $A(\hat{G})+2I = \tB^\top \tB$ for some generators $\bu_1,\ldots, \bu_n, \br$ of $\sL$, where $\tB=[\bu_1,\ldots,\bu_n,\br]$.
		Then, $\sL$ is irreducible.
	\end{lemma}
	\begin{proof}
		Let $X := \{\bu_1,\ldots,\bu_n,\br\}$.
		Assume that there are two lattices $\sL_1$ and $\sL_2$ such that $\sL = \sL_1 \perp \sL_2$.
		Then, $X = (X \cap \sL_1) \sqcup (X \cap \sL_2)$.
		Since the cone $\hat{G}$ is connected, we see that either $X = X \cap \sL_1$ or $X  = X \cap \sL_2$.
		Hence, either $\sL = \sL_1$ or $\sL = \sL_2$.
		This means that $\sL$ is irreducible.
	\end{proof}

	Next, we present the following lemma, which shows that the sets $\cS_n(\sL)$ appearing in the decomposition of $\cS_n$ are pairwise disjoint.

	\begin{lemma}	\label{lem:SS}
		Let $n$ and $m$  be non-negative integers. 
		Let $\sL$ and $\sM$ be irreducible root lattices in $\cL$.
		If $\cS_{n}(\sL) \cap \cS_{m}(\sM) \neq \emptyset$, then $n=m$ and $\sL=\sM$.
	\end{lemma}
	\begin{proof}
		Assume that there exists $S \in \cS_{n}(\sL) \cap \cS_{m}(\sM)$.
		Then, $n=m$.
		By $S \in \cS_{n}(\sL)$, there exists a graph $G$ with $S=S(G)$ satisfying $A(\hat{G})+2I = \tB^\top \tB$ for some generators $\bu_1,\ldots,\bu_n,\bu_0$ of $\sL$, where $\tB = [\bu_1,\ldots,\bu_n,\bu_0]$.
		By $S \in \cS_{m}(\sM)$, there exists a graph $H$ with $S=S(H)$ satisfying 
		\begin{align}	\label{lem:SS:0}
			A(\hat{H})+2I = \tC^\top \tC
		\end{align}
		for some generators $\bw_1,\ldots,\bw_m,\bw_0$ of $\sM$, where $\tC = [\bw_1,\ldots,\bw_m,\bw_0]$. 
		Since $S(G) = S(H)$, the graphs $G$ and $H$ are switching equivalent.
		By Lemma~\ref{lem:vectors-to-graphs}~\ref{lem:vectors-to-graphs:1}, 
		there exists $$(\bv_1,\ldots, \bv_n) \in [\bu_1]_{\bu_0} \times \cdots \times [\bu_n]_{\bu_0}$$ such that $$A(H) + 2I = [\bv_1,\ldots,\bv_n]^\top [\bv_1,\ldots,\bv_n].$$
		Furthermore, we have
		\begin{align}
			A(\hat{H}) + 2I =  \tF^\top  \tF,
		\end{align}
		where $\tF = [\bv_1,\ldots,\bv_n,\bu_0]$.
		Hence, this together with~\eqref{lem:SS:0} implies
		\begin{align}
			\tC^\top \tC = \tF^\top  \tF.
		\end{align}
		This implies that $\langle \bw_1, \ldots,\bw_m ,\bw_0 \rangle$ and $\langle \bv_1,\ldots,\bv_n, \bu_0 \rangle$ are isometric.
		Also, by Lemma~\ref{lem:vectors-to-graphs}~\ref{lem:vectors-to-graphs:2},
		\begin{align}	\label{lem:SS:1}
			\langle \bu_1,\ldots,\bu_n, \bu_0 \rangle = \langle \bv_1,\ldots,\bv_n, \bu_0 \rangle.
		\end{align}
		Thus
		\begin{align}
			\sL = \langle \bu_1,\ldots,\bu_n, \bu_0 \rangle 
			= \langle \bv_1,\ldots,\bv_n, \bu_0 \rangle 
			\quad \text{ and } \quad
			\langle \bw_1, \ldots,\bw_m ,\bw_0 \rangle = \sM
		\end{align}
		are isometric.
		This is the desired conclusion.
	\end{proof}

	The following lemma essentially provides the decompositions of $\cS_n$ and $\cT_n$.
	
	\begin{lemma}	\label{lem:E}
		Let $n$ and $r$ be non-negative integers.
		Let $S$ be a Seidel matrix.
		The following are equivalent.
		\begin{enumerate}
			\item $S \in \cS_{n}$ and $\rank(3I-S)=r$.	\label{lem:E:1}
			\item $S \in \cS_{n}(\sL)$ for some $\sL \in \cL$ with $\rank(\sL)=r+1$.	\label{lem:E:2}
		\end{enumerate}
		Additionally, the following are equivalent.
		\begin{enumerate}
			\item[\textup{(iii)}] $S \in \cT_{n}$.	
			\item[\textup{(iv)}] $S \in \cS_{n}(\sL)$ for some $\sL \in \cL$ with $\rank(\sL) \leq n$.	
		\end{enumerate}
	\end{lemma}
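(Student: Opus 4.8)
The plan is to prove the chain $\ref{lem:E:1}\Leftrightarrow\ref{lem:E:2}$ first, and then extract (iii)$\Leftrightarrow$(iv) as the special case where $r$ is allowed to range over $0,\ldots,n-1$ (since $\rank(3I-S)=r$ with $r\le n-1$ is precisely the condition that $3$ is an eigenvalue, i.e.\ $S\in\cS^{(n)}_e$). So the core work is the equivalence of \ref{lem:E:1} and \ref{lem:E:2} for a fixed $r$.

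For \ref{lem:E:1}$\Rightarrow$\ref{lem:E:2}: assume $S\in\cS^{(n)}$ with $\rank(3I-S)=r$. Write $S=S(G)$ for some graph $G$ of order $n$ (possible for any Seidel matrix, as noted after the definition). By Theorem~\ref{thm:Cao}, $A(\tilde G)$ has smallest eigenvalue at least $-2$, hence $A(\tilde G)+2I$ is positive semidefinite, so it is the Gram matrix of some vectors $\bu_1,\ldots,\bu_n,\br\in\R^m$. These vectors all have norm $2$ and integral inner products, so the lattice $\sL:=\langle\bu_1,\ldots,\bu_n,\br\rangle$ is an integral lattice generated by roots, i.e.\ a root lattice, and $X:=\{\bu_1,\ldots,\bu_n,\br\}$ is a generator whose Gram matrix is $A(\tilde G)+2I$; thus $S\in\cS^{(n)}(\sL)$. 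By Lemma~\ref{lem:irr}, $\sL$ is irreducible, so $\sL\in\cL$. Finally $\rank(\sL)=\rank(A(\tilde G)+2I)=\rank(3I-S)+1=r+1$, where the middle equality is the last sentence of Theorem~\ref{thm:Cao}. This gives \ref{lem:E:2}.

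For \ref{lem:E:2}$\Rightarrow$\ref{lem:E:1}: suppose $S\in\cS^{(n)}(\sL)$ with $\sL\in\cL$ and $\rank(\sL)=r+1$. By Definition~\ref{dfn:cS(L)}, $S=S(G)$ where $G$ has order $n$ and some generator $X$ of $\sL$ has Gram matrix $A(\tilde G)+2I$. Being a Gram matrix, $A(\tilde G)+2I$ is positive semidefinite, so the smallest eigenvalue of $A(\tilde G)$ is at least $-2$; by Theorem~\ref{thm:Cao} again, $S=S(G)$ has largest eigenvalue at most $3$, i.e.\ $S\in\cS^{(n)}$, and $\rank(3I-S)=\rank(A(\tilde G)+2I)-1$. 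It remains to check $\rank(A(\tilde G)+2I)=\rank(\sL)$: the generator $X$ spans $\sL\otimes\R$ over $\R$ (a generator by definition consists of $\R$-linearly independent vectors spanning the lattice up to index, so in particular spans the ambient space of $\sL$), and the rank of a Gram matrix of a finite set of vectors equals the dimension of their $\R$-span, which is $\rank(\sL)$. Hence $\rank(3I-S)=\rank(\sL)-1=r$, giving \ref{lem:E:1}.

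I do not anticipate a serious obstacle here; the statement is essentially a bookkeeping consequence of Theorem~\ref{thm:Cao} and Lemma~\ref{lem:irr}. The one point needing mild care is the identification $\rank(A(\tilde G)+2I)=\rank(\sL)$ in the backward direction, which is where the word "generator" must be used correctly: a generating set of $\R$-linearly independent vectors for $\sL$ necessarily has the same $\R$-span as $\sL$, so its Gram matrix is nonsingular and of size exactly $\rank(\sL)=r+1$. For the "in particular" part, note that $\rank(3I-S)\le n-1$ iff $3$ is an eigenvalue of $S$ iff $S\in\cS^{(n)}_e$ (given $S\in\cS^{(n)}$, the largest eigenvalue is at most $3$, so it equals $3$ exactly when $3$ is an eigenvalue); and by the established equivalence this happens iff $S\in\cS^{(n)}(\sL)$ for some $\sL\in\cL$ with $\rank(\sL)=r+1\le n$. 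Conversely any such membership forces $S\in\cS^{(n)}_e$ by the same count, completing the proof.
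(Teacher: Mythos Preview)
Your approach is essentially the paper's: both directions go through Theorem~\ref{thm:Cao} and Lemma~\ref{lem:irr}, and the ``in particular'' is the case $r\le n-1$.

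The one slip is in the backward direction, where you justify $\rank(A(\tilde G)+2I)=\rank(\sL)$ by asserting that a ``generator'' consists of $\R$-linearly independent vectors whose Gram matrix is therefore nonsingular of size $r+1$. That is not what ``generator'' means in Definition~\ref{dfn:cS(L)}: it is simply a generating set of $n+1$ vectors, and typically $n+1>\rank(\sL)$, so $A(\tilde G)+2I$ is a singular $(n+1)\times(n+1)$ matrix. The correct justification---which you in fact also stated---is that any $\Z$-generating set of $\sL$ spans $\sL\otimes\R$ over $\R$, and the rank of a Gram matrix equals the dimension of the $\R$-span of the vectors. Once the incorrect parenthetical and the nonsingularity claim in your final paragraph are dropped, the argument is correct and matches the paper's.
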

	\begin{proof}
		First, we show that \ref{lem:E:1} and~\ref{lem:E:2} are equivalent.
		Assume~\ref{lem:E:1}.
		Let $G$ be a graph with $S=S(G)$.
		By Theorem~\ref{thm:Cao}, $A(\hat{G})$ has smallest eigenvalue at least $-2$ and satisfies $\rank(A(\hat{G})+2I)=r+1$.
		Hence, $A(\hat{G})+2I = \tB^\top \tB$ and $\rank \tB = r+1$ for some vectors $\bu_1,\ldots,\bu_n, \bu_0$, where $\tB = [\bu_1,\ldots,\bu_n,\bu_0]$.
		Let $\sL := \langle \bu_1,\ldots,\bu_n, \bu_0 \rangle$.
		Then, $\sL$ is an irreducible root lattice by Lemma~\ref{lem:irr}, and $\rank(\sL) = r+1$ follows from $\rank \tB = r+1$.
		Here, we may assume $\sL \in \cL$  (cf.~\cite[Theorem~1.2]{Lattices2013}).
		Therefore, \ref{lem:E:2} follows.
		
		Conversely, we assume~\ref{lem:E:2}.
		Then, there exists a graph $G$ with $S=S(G)$ satisfying $A(\hat{G})+2I = \tB^\top \tB$ for some generators $\bu_1,\ldots,\bu_n, \bu_0$ of $\sL$, where $\tB = [\bu_1,\ldots,\bu_n, \bu_0]$.
		Hence, $A(\hat{G})$ has smallest eigenvalue at least $-2$.
		Additionally, $\rank(A(\hat{G})+2I) = \rank \tB = \rank(\sL) = r+1$.
		Thus, by Theorem~\ref{thm:Cao}, $S(G)$ has largest eigenvalue at most $3$ and $\rank(3I-S(G)) = r$.
		Therefore, \ref{lem:E:1} follows.
		
		Next, we show that \textup{(iii)} and \textup{(iv)} are equivalent.
		Indeed, we see by the definition of $\cT_{n}$ that $S \in \cT_{n}$ if and only if $S \in \cS_{n}$ and $\rank(3I-S) \leq n-1$.
		Since \ref{lem:E:1} and~\ref{lem:E:2} are equivalent, we conclude that \textup{(iii)} and~\textup{(iv)} are equivalent.
		This is the desired conclusion.
	\end{proof}

	From the above lemmas, we explicitly decompose the sets $\cS_{n}$ and $\cT_n$ into sets $\cS_{n}(\sL)$.
	\begin{lemma}	\label{lem:Seidel decomp}
		Let $n$ be a non-negative integer.
		Then
		\begin{align}	\label{lem:Seidel decomp:3}
			\Omega_{n} = \bigsqcup_{\sL \in \cE} \cS_{n}(\sL)
			\quad \text{ and } \quad
			\Theta_{n} = \bigsqcup_{\sL \in \cE  \text{ with } \rank(\sL) \leq n} \cS_{n}(\sL).
		\end{align}
		Also,
			\begin{align}	\label{lem:Seidel decomp:1}
				\cS_{n} 
				= \bigsqcup_{\sL \in \cL} \cS_{n}(\sL)
				= \Omega_{n}  \sqcup \left( \bigsqcup_{\sL \in \cL \setminus \cE} \cS_{n}(\sL) \right)
			\end{align} 
		and
			\begin{align}	\label{lem:Seidel decomp:2}
			\cT_{n} = \bigsqcup_{\sL \in \cL  \text{ with } \rank(\sL) \leq n} \cS_{n}(\sL)
			= \Theta_{n}  \sqcup \left( \bigsqcup_{\sL \in \cL \setminus \cE \text{ with } \rank(\sL) \leq n} \cS_{n}(\sL) \right).
			\end{align}
	\end{lemma}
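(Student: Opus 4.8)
The plan is to deduce all four identities from Lemma~\ref{lem:E}, after first recording two auxiliary facts: that the family $\{\cS^{(n)}(\sL)\}_{\sL \in \cL}$ is pairwise disjoint, and that $\cE$ is precisely $\{\sL \in \cL : \rank(\sL) \le 8\}$.

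For the disjointness I would argue as follows. Suppose $S \in \cS^{(n)}(\sL) \cap \cS^{(n)}(\sL')$ with $\sL, \sL' \in \cL$. A Seidel matrix $S$ determines the graph $G$ with $S = J - I - 2A(G)$ uniquely, hence determines the matrix $A(\tilde G) + 2I$; by Definition~\ref{dfn:cS(L)} both $\sL$ and $\sL'$ possess a generating tuple with this as Gram matrix. Since two lattices admitting generating tuples with a common Gram matrix are isometric --- send one tuple to the other, with well-definedness and injectivity forced by the Gram relations --- we get $\sL \cong \sL'$, and as the members of $\cL$ are pairwise non-isometric this gives $\sL = \sL'$. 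Thus every Seidel matrix lies in $\cS^{(n)}(\sL)$ for at most one $\sL \in \cL$, so each union over a subfamily of $\cL$ occurring in the statement is automatically a disjoint union. For the second fact, $\sA_m$ and $\sD_m$ have rank $m$ while $\sE_6,\sE_7,\sE_8$ have rank at most $8$, so comparing the definitions of $\cE$ and $\cL$ yields $\cE = \{\sL \in \cL : \rank(\sL) \le 8\}$ immediately.

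Granting these, the rest is bookkeeping on Lemma~\ref{lem:E}. The equivalence \ref{lem:E:1} $\Leftrightarrow$ \ref{lem:E:2} reads exactly as $\cS^{(n)} = \bigcup_{\sL \in \cL} \cS^{(n)}(\sL)$, which is disjoint by the above; this is the first equality of \eqref{lem:Seidel decomp:1}. Moreover, if $S \in \cS^{(n)}(\sL)$ with $\sL \in \cL$, the same equivalence gives $\rank(3I - S) = \rank(\sL) - 1$, so $S \in \Omega^{(n)}$ exactly when $\rank(\sL) \le 8$, i.e.\ $\sL \in \cE$; this is the first identity of \eqref{lem:Seidel decomp:3}, and splitting $\cL = \cE \sqcup (\cL \setminus \cE)$ gives the second equality of \eqref{lem:Seidel decomp:1}. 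Likewise, the equivalence (iii) $\Leftrightarrow$ (iv) of Lemma~\ref{lem:E} gives $\cS^{(n)}_e = \bigsqcup_{\sL \in \cL,\ \rank(\sL) \le n} \cS^{(n)}(\sL)$; intersecting with $\Omega^{(n)}$ restricts the index set to $\{\sL \in \cE : \rank(\sL) \le n\}$, which is the second identity of \eqref{lem:Seidel decomp:3}, and splitting off $\cL \setminus \cE$ gives the second equality of \eqref{lem:Seidel decomp:2}.

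I expect no serious obstacle --- which is exactly why the excerpt says the lemma follows immediately. The only point that needs a word of care is the disjointness, namely verifying that $S \mapsto A(\tilde G) + 2I$ attaches to each Seidel matrix a single isometry class and hence a single element of $\cL$; everything else is a reindexing of Lemma~\ref{lem:E} together with the identification $\cE = \{\sL \in \cL : \rank(\sL) \le 8\}$.
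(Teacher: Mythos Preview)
Your proof is correct and follows the same route as the paper, which simply records that the lemma follows immediately from Lemma~\ref{lem:E}. You additionally spell out the disjointness of the family $\{\cS^{(n)}(\sL)\}_{\sL\in\cL}$ --- a point the paper leaves implicit --- via the standard observation that a common Gram matrix for generating tuples forces an isometry, hence equality in $\cL$.
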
	
	\begin{proof}
		This follows from Lemmas~\ref{lem:SS} and~\ref{lem:E}.
	\end{proof}
	
	Additionally, from the decomposition in Lemma~\ref{lem:Seidel decomp:3}, we obtain decompositions of $\cS_n/\sim_{\sw}$ and $\cT_n/\sim_{\sw}$.
	This lemma will be used to determine the values of $s(n)$, $t(n)$, and $\omega(n)$.

	\begin{lemma}	\label{lem:Seidel decomp2}
		For a non-negative integer $n$,
		\begin{align}	\label{lem:Seidel decomp2:1}
			\cS_{n}/{\sim_{\sw}} 
			= \Omega_{n}/{\sim_{\sw}}  \sqcup \left( \bigsqcup_{\sL \in \cL \setminus \cE} \cS_{n}(\sL)/{\sim_{\sw}} \right)
		\end{align}
		and
		\begin{align}	\label{lem:Seidel decomp2:2}
			\cT_{n}/{\sim_{\sw}} 
			= \Theta_{n}/{\sim_{\sw}}  \sqcup \left( \bigsqcup_{\sL \in \cL \setminus \cE \text{ with } \rank(\sL) \leq n} \cS_{n}(\sL)/{\sim_{\sw}} \right).
		\end{align}
	\end{lemma}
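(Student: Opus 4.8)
The plan is to derive Lemma~\ref{lem:Seidel decomp2} from Lemma~\ref{lem:Seidel decomp} by observing that each set $\cS^{(n)}(\sL)$ appearing there is a union of switching classes; once this is established the statement is purely formal. The relevant elementary fact is: if a set $X$ carrying an equivalence relation $\sim$ is a disjoint union $X = \bigsqcup_{i} X_{i}$ in which every $X_{i}$ is a union of $\sim$-classes, then $X/{\sim} = \bigsqcup_{i} X_{i}/{\sim}$, and the union on the right is again disjoint. (The quotient map embeds each $X_{i}/{\sim}$ into $X/{\sim}$; the images cover $X/{\sim}$; and if $x \in X_{i}$ and $y \in X_{j}$ satisfy $x \sim y$, then $y \in X_{i}$ because $X_{i}$ is a union of classes, so $X_{i} \cap X_{j} \neq \emptyset$ and $i = j$.) Granting the claim below, I would apply this to the disjoint decompositions of $\cS^{(n)}$ and $\cS^{(n)}_{e}$ furnished by Lemma~\ref{lem:Seidel decomp}, using in addition that $\Omega^{(n)} = \bigsqcup_{\sL \in \cE}\cS^{(n)}(\sL)$ and $\Omega^{(n)}_{e} = \bigsqcup_{\sL \in \cE,\ \rank(\sL)\leq n}\cS^{(n)}(\sL)$ are themselves unions of switching classes; this yields both displayed identities.

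So the heart of the proof is the claim that $\cS^{(n)}(\sL)$ is closed under ${\sim_{\sw}}$ for every $\sL \in \cL$. To see this, let $S \in \cS^{(n)}(\sL)$ and $S' {\sim_{\sw}} S$. By definition $S = S(G)$ where $\sL$ admits a generator $\bu_{1},\dots,\bu_{n},\br$ with Gram matrix $A(\tilde G)+2I$, the vector $\br$ corresponding to the apex of $\tilde G$; in particular $(\bu_{i},\br)=1$, so each of the two vectors in $[\bu_{i}]_{\br} = \{\bu_{i},\br-\bu_{i}\}$ lies in $N_{\br}(\sL)$. Write $S' = S(H)$; switching equivalence of $G$ and $H$ gives $H \cong G^{U}$ for some $U \subseteq V(G)$. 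By the surjectivity asserted in Lemma~\ref{lem:vectors-to-graphs}~\ref{lem:vectors-to-graphs:1}, some tuple $(\bv_{1},\dots,\bv_{n}) \in [\bu_{1}]_{\br} \times \cdots \times [\bu_{n}]_{\br}$ has Gram matrix $A(G^{U})+2I$. Since $(\bv_{i},\br)=1$ for every $i$, the Gram matrix of $\bv_{1},\dots,\bv_{n},\br$ equals $A(\widetilde{G^{U}})+2I$, and by Lemma~\ref{lem:vectors-to-graphs}~\ref{lem:vectors-to-graphs:2} these $n+1$ roots still generate $\sL$; hence $S(G^{U}) \in \cS^{(n)}(\sL)$. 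Finally any isomorphism $G^{U} \to H$ extends to an isomorphism of the cones sending apex to apex, so relabelling the generator above produces a generator of $\sL$ with Gram matrix $A(\tilde H)+2I$, whence $S' = S(H) \in \cS^{(n)}(\sL)$.

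The argument is short, and essentially all its content is in this last claim; the one point requiring attention is that the members of $\cS^{(n)}(\sL)$ are genuine matrices rather than isomorphism classes of graphs, so switching equivalence has to be broken into its sign part --- absorbed by the substitutions $\bu_{i} \mapsto \br - \bu_{i}$ via Lemma~\ref{lem:vectors-to-graphs} --- and its permutation part, which is dealt with by transporting the generator along an isomorphism of cones. I do not anticipate any genuine obstacle beyond this bookkeeping.
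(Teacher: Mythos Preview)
Your proposal is correct and follows essentially the same approach as the paper: reduce, via Lemma~\ref{lem:Seidel decomp}, to showing that each $\cS^{(n)}(\sL)$ is a union of switching classes, and then establish this closure using both parts of Lemma~\ref{lem:vectors-to-graphs}. Your treatment is in fact slightly more careful than the paper's, which invokes the surjectivity of Lemma~\ref{lem:vectors-to-graphs}\ref{lem:vectors-to-graphs:1} directly onto $H$ without explicitly separating the permutation part of the switching equivalence as you do.
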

	\begin{proof} 
		By Lemma~\ref{lem:Seidel decomp}, it suffices to show that $S' \in  \cS_{n}(\sL)$ for any $S \in  \cS_{n}(\sL)$ and $S' \in [S]$.
		Indeed, we fix such Seidel matrices $S$ and $S'$.
		By Definition~\ref{dfn:cS(L)}, there exists a graph $G$ with $S=S(G)$ satisfying $A(\hat{G})+2I = \tB^\top \tB$ for some generators $\bu_1,\ldots,\bu_n,\br$ of $\sL$, where $\tB = [\bu_1,\ldots,\bu_n,\br]$.
		Here, the vector $\br$ corresponds to the apex for $G$.
		Let $H$ be a graph such that $S' = S(H)$.
		Then, $H \in [G]$ follows from the assumption $S' \in [S]$.
		By Lemma~\ref{lem:vectors-to-graphs}~\ref{lem:vectors-to-graphs:1}, there exists $(\bv_1,\ldots,\bv_n) \in [\bu_1]_\br \times \cdots \times [\bu_n]_{\br}$ such that $A(H)+2I = \oC^\top \oC$, where $\oC = [\bv_1,\ldots,\bv_n]$.
		Furthermore, $A(\hat{H})+2I = \tC^\top \tC$, where $\tC = [\bv_1,\ldots,\bv_n,\br]$.
		By Lemma~\ref{lem:vectors-to-graphs}~\ref{lem:vectors-to-graphs:2}, we have $\langle \bv_1,\ldots,\bv_n, \br \rangle = \langle \bu_1,\ldots,\bu_n, \br \rangle = \sL$.
		This means that $\bv_1,\ldots,\bv_n, \br$ are generators of $\sL$.
		Therefore, $S' = S(H) \in \cS_{n}(\sL)$.
	\end{proof}

	Next, we present an example of the decomposition~\eqref{lem:Seidel decomp2:1} given in Lemma~\ref{lem:Seidel decomp2}.
	\begin{example}	\label{ex:fig}
		All graphs of order at most $7$ are explicitly enumerated up to switching in~\cite[Tables~4.1 and~4.2]{Lint1966}.  
		Figures~\ref{fig:5} and~\ref{fig:6}, taken from~\cite[Table~4.1]{Lint1966}, show complete systems of representatives of $\cS_n/\sim_{\sw}$ of cardinality $s(n)$ for $n=2,\ldots,6$.
	\begin{figure}[ht]
	  \centering
	  \begin{tikzpicture}[scale=0.5]
	    \foreach \k/\n/\edges/\glabel in {
	      1/2/{}/$\sA_3$,
	      2/3/{2/3}/$\sA_4$,
	      3/3/{}/$\sD_4$,
	      4/4/{1/2,3/4}/$\sA_5$,
	      5/4/{3/4}/$\sD_5$,
	      6/4/{}/$\sD_4$
	    }{
	      \begin{scope}[xshift=(\k-1)*5cm]
	        \foreach \i in {1,...,\n}{
	          \node[circle,draw,fill=black,
	                inner sep=0.8pt,minimum size=3pt]
	                (v\k\i) at ({360/\n*(\i-1)}:1) {};
	        }
	        \foreach \u/\v in \edges {
	          \draw (v\k\u) -- (v\k\v);
	        }
	        \node[below] at (0,-1.4) {\glabel};
	      \end{scope}
	    }
	  \end{tikzpicture}
	 \\ \vspace{10pt}	
	 \begin{tikzpicture}[scale=0.5]
				\foreach \k/\edges/\glabel in {
					1/{1/2,2/5,5/1,3/4}/$\sA_6$,
					2/{1/2,2/5,5/1}/$\sD_6$,
					3/{2/5,3/4}/$\sE_6$,
					4/{2/3,3/4,4/5}/$\sE_6$,
					5/{1/2,1/5}/$\sD_5$
				}{
					\begin{scope}[xshift=(\k-1)*5cm]
						\foreach \i in {1,...,5}{
							\node[
								circle,draw,fill=black,
								inner sep=0.8pt,      
								minimum size=3pt     
							] (v\i) at ({360/5*(\i-1)}:1) {};
						}
						\foreach \u/\v in \edges {
							\draw (v\u) -- (v\v);
						}
						\node[below] at (0,-1.3) {\glabel};
					\end{scope}
				}
			\end{tikzpicture}
	\caption{Complete systems of representatives of $\cS_n/\sim_{\sw}$ for $n=2,3,4,5$}
			\label{fig:5}
		\end{figure}
		Each representative graph is labeled by the root lattice $\sL$ such that its Seidel matrix belongs to $\cS_n(\sL)$.
		These root lattices can be obtained by the construction given in the proof of Lemma~\ref{lem:E} with a computer.
		\begin{figure}[ht]
	  \centering
	  \begin{tikzpicture}[scale=0.5]
	    \foreach \k/\edges/\glabel in {
	      1/{1/2,2/6,6/1,3/4,4/5,5/3}/$\sA_7$,
	      2/{1/2,2/6,6/1,3/4,4/5}/$\sD_7$,
	      3/{1/2,2/6,6/1,3/5}/$\sE_7$,
	      4/{1/2,3/4,4/5,5/6}/$\sE_7$,
	      5/{1/2,6/1,4/5,3/4}/$\sD_6$
	    }{
	      \begin{scope}[xshift=(\k-1)*5cm]
	        \foreach \i in {1,...,6}{
	          \node[circle,draw,fill=black,
	                inner sep=0.8pt,minimum size=3pt]
	                (v\i) at ({360/6*(\i-1)}:1) {};
	        }
	        \foreach \u/\v in \edges {
	          \draw (v\u) -- (v\v);
	        }
	        \node[below] at (0,-1.5) {\glabel};
	      \end{scope}
	    }
	    \foreach \k/\edges/\glabel in {
	      6/{1/2,3/6,4/5}/$\sE_6$,
	      7/{2/3,3/4,4/5,5/6}/$\sE_6$,
	      8/{2/3,3/4,4/5,5/6,6/2}/$\sE_7$,
	      9/{2/3,3/5,5/6,6/2}/$\sD_5$
	    }{
	      \begin{scope}[xshift=(\k-6)*5cm,yshift=-4.5cm]
	        \foreach \i in {1,...,6}{
	          \node[circle,draw,fill=black,
	                inner sep=0.8pt,minimum size=3pt]
	                (v\i) at ({360/6*(\i-1)}:1) {};
	        }
	        \foreach \u/\v in \edges {
	          \draw (v\u) -- (v\v);
	        }
	        \node[below] at (0,-1.5) {\glabel};
	      \end{scope}
	    }
	  \end{tikzpicture}
	  \caption{A complete system of representatives of $\cS_6/\sim_{\sw}$}
	  \label{fig:6}
	\end{figure}
	Note that the two figures are consistent with Lemma~\ref{lem:S(A)}, which determines $\cS_n(\sA_m)/\sim_{\sw}$, and with Lemma~\ref{lem:S(D)}, which determines $\cS_n(\sD_m)/\sim_{\sw}$.
	\end{example}

\subsection{Construction of the sets $\cS_{n}(\sL)/\sim_{\sw}$}	
        Let $\sL$ be an irreducible root lattice, and $\br$ a root of $\sL$.
        The stabilizer subgroup $W(\sL)_\br$ of the root $\br$ in the Weyl group $W(\sL)$ acts on $N_{\br}(\sL)/\sim_\br$
        in such a way that 
        $w( [\bu]_\br ) := [w(\bu)]_\br$
        for $w \in W(\sL)_\br$ and $[\bu]_{\br}=\{\bu,\br-\bu\} \in N_{\br}(\sL)/\sim_\br$, where $\bu \in N_{\br}(\sL)$.
        Furthermore, the stabilizer subgroup $W(\sL)_\br$ naturally acts on $\binom{N_{\br}(\sL)/\sim_\br}{n}$. 
	We show that for each irreducible root lattice $\sL$, the mapping $\phi^{(n)}_{\sL, \br}$ in the following definition can construct all the switching classes in $\cS_{n}(\sL)/\sim_{\sw}$.
	Note that this mapping is well-defined by Lemma~\ref{lem:vectors-to-graphs}.
	\begin{definition}
		Let $n$ be a non-negative integer.
		Let $\sL$ be an irreducible root lattice, and $\br$ a root in $\sL$.
		Define
		\begin{align*}
			N_{\br}^{(n)}(\sL) := \left\{ \{ [\bu_1]_\br,\ldots,[\bu_n]_\br \} \in \binom{N_{\br}(\sL) / \sim_\br}{n} : \sL= \langle \bu_1,\ldots,\bu_n, \br \rangle \right\}.
		\end{align*}
		The stabilizer  subgroup $W(\sL)_\br$ of the root $\br$ in the Weyl group $W(\sL)$ naturally acts on $N_{\br}^{(n)}(\sL)$.
		Define
		\begin{align*}
			\begin{array}{rccc}
				\phi^{(n)}_{\sL, \br} : & W(\sL)_\br \backslash N_{\br}^{(n)}(\sL) & \to & \cS_{n}(\sL)/{\sim_{\sw}} \\
				& W(\sL)_\br\{ [\bu_1]_\br,\ldots, [\bu_n]_\br \} & \mapsto & [S(G)]
			\end{array}
		\end{align*}
		where $A(G)+2I = B^\top B$ with $B = [\bu_1,\ldots,\bu_n]$.
	\end{definition}

	First, we prove in the following lemma that the mapping $\phi^{(n)}_{\sL,\br}$ is surjective. 
	Consequently, an explicit determination of the domain $W(\sL)_\br \backslash N^{(n)}_{\br}(\sL)$ immediately yields the corresponding image $\cS_n(\sL)/\sim_{\sw}$.
	In Subsections~\ref{subsec:A} and~\ref{subsec:D}, we carry out this construction for root lattices of type $A$ or $D$.
	
	\begin{lemma}	\label{lem:surj}
		Let $n$ be a non-negative integer.
		Let $\sL$ be an irreducible root lattice, and $\br$ a root in $\sL$.
		Then, $\phi^{(n)}_{\sL,\br}$ is surjective.
	\end{lemma}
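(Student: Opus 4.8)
The plan is to show that every switching class in $\cS^{(n)}(\sL)/{\sim_{\sw}}$ has a preimage under $\phi^{(n)}_{\sL,\br}$. So fix $[S(G)] \in \cS^{(n)}(\sL)/{\sim_{\sw}}$ with $S = S(G)$. By Definition~\ref{dfn:cS(L)}, there is a graph $G'$ switching equivalent to $G$ and vectors $\bw_1,\ldots,\bw_n,\br'$ whose Gram matrix is $A(\tilde{G'})+2I$ with $\sL = \langle \bw_1,\ldots,\bw_n,\br' \rangle$; here $\br'$ is the root corresponding to the apex of the cone, so $\br'$ is a root of $\sL$ and $(\bw_i,\br')=1$ for all $i$, i.e. $\bw_i \in N_{\br'}(\sL)$. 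The only issue is that the given switching root is $\br'$, not the prescribed $\br$. But by Lemma~\ref{lem:transitive} the Weyl group $W(\sL)$ acts transitively on the roots of $\sL$, so there is $w \in W(\sL)$ with $w(\br') = \br$. Set $\bu_i := w(\bw_i)$. Then $(\bu_i,\bu_j) = (\bw_i,\bw_j)$ since $w$ is an isometry, $(\bu_i,\br) = (\bw_i,\br') = 1$ so $\bu_i \in N_{\br}(\sL)$, and $\langle \bu_1,\ldots,\bu_n,\br\rangle = w(\langle \bw_1,\ldots,\bw_n,\br'\rangle) = w(\sL) = \sL$.

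Next I would check that the $[\bu_i]_\br$ are distinct, so that $\{[\bu_1]_\br,\ldots,[\bu_n]_\br\}$ is genuinely an element of $\binom{N_{\br}(\sL)/\sim_\br}{n}$, and hence of $N_{\br}^{(n)}(\sL)$ by the displayed computation of the inner products and the spanning property. Distinctness follows from the structure of $\tilde{G'}$: the rows of $A(\tilde{G'})+2I$ indexed by the non-apex vertices are pairwise distinct (two vertices $i,j$ of $G'$ with the same row would force $(\bw_i,\bw_j) = 2$, impossible for distinct norm-$2$ vectors with the other off-diagonal constraints, or more directly the diagonal entry $2$ versus the off-diagonal entry forces $\bw_i \ne \bw_j$ and also $\bw_i \ne \br' - \bw_j$ since $(\bw_i, \br' - \bw_j) = 1 - (\bw_i,\bw_j) \in \{0,1,-1\}$ while it would need to be $2$). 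Applying the isometry $w$ preserves this, so the $[\bu_i]_\br$ are pairwise distinct, and therefore $W(\sL)_\br\{[\bu_1]_\br,\ldots,[\bu_n]_\br\} \in W(\sL)_\br \backslash N_{\br}^{(n)}(\sL)$.

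Finally, by the Remark preceding this lemma, the Gram matrix of $\bu_1,\ldots,\bu_n$ equals $A(K)+2I$ for some graph $K$, and $\phi^{(n)}_{\sL,\br}$ sends the class of $\{[\bu_1]_\br,\ldots,[\bu_n]_\br\}$ to $[S(K)]$. It remains to identify $[S(K)]$ with $[S(G)]$. Since $(\bu_i,\bu_j) = (\bw_i,\bw_j)$ for all $i,j$, the graph $K$ is isomorphic to $G'$, hence $S(K) {\sim_{\sw}} S(G') {\sim_{\sw}} S(G)$, so $[S(K)] = [S(G)]$ as required.

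The only real obstacle I anticipate is the bookkeeping in the second paragraph: one must be careful that the element produced actually lies in $N_{\br}^{(n)}(\sL)$ rather than merely in $\binom{N_{\br}(\sL)/\sim_\br}{n}$, which is exactly the spanning condition $\sL = \langle \bu_1,\ldots,\bu_n,\br\rangle$ — and that is precisely where transitivity of the Weyl group is used, to transport the spanning generator of $\sL$ that comes from the cone over $G'$ into one based at the prescribed root $\br$. Everything else is a routine transport-of-structure along the isometry $w$.
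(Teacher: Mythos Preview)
Your proof is correct and follows essentially the same approach as the paper's: both pick a representative $S(G)$ with a generating set of $\sL$ whose Gram matrix is $A(\tilde G)+2I$, then invoke Lemma~\ref{lem:transitive} to transport the switching root to the prescribed $\br$. You are simply more explicit than the paper, which compresses the Weyl-group transport into a ``without loss of generality $\bu_0=\br$'' and omits the distinctness check for the $[\bu_i]_\br$ (which, as you note, follows from the off-diagonal entries of $A(G')+2I$ lying in $\{0,1\}$ via the Remark preceding the lemma).
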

	\begin{proof}	
		Fix $[S] \in \cS_{n}(\sL)/{\sim_{\sw}}$.
        		By the definition of $\cS_{n}(\sL)$, there exist a graph $G$ with $S=S(G)$ satisfying $A(\hat{G})+2I = \tB^\top \tB$ for some generators $\bu_1,\ldots,\bu_n,\bu_0$ of $\sL$, where $\tB = [\bu_1,\ldots,\bu_n,\bu_0]$.
       		Here, $\bu_0$ corresponds to the apex for $G$.
		By Lemma~\ref{lem:transitive}, without loss of generality we may assume $\bu_0 = \br$.
		Then, we see that $\{ [\bu_1]_\br,\ldots,[\bu_n]_\br \} \in  N_{\br}^{(n)}(\sL)$.
		Hence, $W(\sL)_\br\{ [\bu_1]_\br,\ldots,[\bu_n]_\br \} \in W(\sL)_\br \backslash N_{\br}^{(n)}(\sL)$.
	        This means that $\phi_{\sL,\br}^{(n)}$ is surjective.
	\end{proof}

\subsection{Construction of the sets $\cS_{n}(\sA_m) /\sim_{\sw} $}	\label{subsec:A}
	In this subsection, we construct the set $\cS_{n}(\sA_m)/\sim_{\sw}$. 
	First, we determine the domain $W(\sA_m)_{\br} \backslash N_{\br}^{(n)}(\sA_m)$ of the mapping $\phi^{(n)}_{\sA_m,\br}$ for some switching root $\br$.
	\begin{lemma}	\label{lem:W(A)N}
		Let $n \geq 0$ and $m \geq 1$ be integers.
		Let $\br := \be_m - \be_{m+1} \in \sA_m$.
		Then, the set $W(\sA_m)_\br \backslash N_{\br}^{(n)}(\sA_m)$ has only one element
		$W(\sA_m)_\br\{ [ \be_i-\be_{m+1} ]_\br : i =1,\ldots,m-1 \}$ if $m=n+1$,
		and it is the empty set otherwise.
	\end{lemma}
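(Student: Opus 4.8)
The plan is to analyze the structure of $N_{\br}(\sA_m)$ and its quotient under $\sim_\br$ explicitly, then identify the subsets that generate all of $\sA_m$, and finally show the stabilizer acts transitively on them. First I would recall that the roots of $\sA_m = \{\bv \in \Z^{m+1} : (\bv,\bj)=0\}$ are exactly the vectors $\be_i - \be_j$ with $i \neq j$. Fixing $\br = \be_m - \be_{m+1}$, a root $\bu = \be_i - \be_j$ satisfies $(\bu,\br)=1$ precisely when $\{i,j\}$ meets $\{m,m+1\}$ in the "positive" slot: concretely, either $i = m$ and $j \notin \{m,m+1\}$, giving $\bu = \be_m - \be_j$, or $j = m+1$ and $i \notin \{m,m+1\}$, giving $\bu = \be_i - \be_{m+1}$. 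Since $\br - (\be_m - \be_j) = \be_j - \be_{m+1}$ and $\br - (\be_i - \be_{m+1}) = \be_m - \be_i$, the $\sim_\br$-class $[\be_i - \be_{m+1}]_\br = \{\be_i - \be_{m+1},\, \be_m - \be_i\}$, and these classes are indexed bijectively by $i \in \{1,\ldots,m-1\}$. So $|N_{\br}(\sA_m)/\sim_\br| = m-1$.

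Next I would observe that since there are only $m-1$ equivalence classes available, $N_{\br}^{(n)}(\sA_m)$ can be nonempty only when $n \leq m-1$, and I claim in fact only $n = m-1$ works. For the generation condition: given any subset $\{[\be_{i_1}-\be_{m+1}]_\br,\ldots,[\be_{i_n}-\be_{m+1}]_\br\}$, a choice of representatives together with $\br$ generates a sublattice of $\sA_m$ of rank at most $n+1$. Using $\br = \be_m - \be_{m+1}$ and the fact that from $\br$ and $\be_{i}-\be_{m+1}$ (or its partner $\be_m - \be_i$) one recovers $\be_i - \be_m$, one sees the generated lattice lies in the $\Z$-span of $\{\be_i - \be_m : i \in \{i_1,\ldots,i_n\}\} \cup \{\be_m - \be_{m+1}\}$, which has rank $n+1$. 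This equals $\sA_m$ (rank $m$) if and only if $n+1 = m$, i.e. $n = m-1$, and in that case $\{i_1,\ldots,i_{m-1}\} = \{1,\ldots,m-1\}$ necessarily, so the only element of $N_{\br}^{(m-1)}(\sA_m)$ is $\{[\be_i - \be_{m+1}]_\br : i = 1,\ldots,m-1\}$. (One should double-check this generation claim is insensitive to the choice of representatives, which is immediate since switching $\be_i-\be_{m+1}$ to $\be_m - \be_i$ changes nothing modulo $\br$.)

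Since $N_{\br}^{(m-1)}(\sA_m)$ is already a singleton when $n = m-1$, the quotient $W(\sA_m)_\br \backslash N_{\br}^{(m-1)}(\sA_m)$ is automatically a single element, and for $n \neq m-1$ the set $N_{\br}^{(n)}(\sA_m)$ — hence its quotient — is empty. This gives exactly the statement. The main obstacle, and the only place any real care is needed, is the generation argument in the middle step: verifying precisely that the sublattice generated by $\br$ together with one representative from each of $n$ distinct $\sim_\br$-classes has rank exactly $n+1$ and equals $\sA_m$ only when $n = m-1$. I expect this to come down to a short explicit linear-algebra computation with the $\be_i$, e.g. exhibiting the generators in terms of the standard basis differences and computing the rank of the resulting integer matrix; no transitivity of the stabilizer action is even needed here because the relevant set is a singleton.
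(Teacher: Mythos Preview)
Your proposal is correct and follows essentially the same route as the paper's proof: both compute $N_{\br}(\sA_m)/{\sim_\br}$ explicitly as the $(m-1)$-element set $\{[\be_i-\be_{m+1}]_\br : i=1,\ldots,m-1\}$, combine the upper bound $n\leq m-1$ from cardinality with the lower bound $n\geq m-1$ from the rank/generation condition, and observe that the resulting singleton needs no stabilizer argument. Your write-up is slightly more explicit about the rank computation and about verifying that the full set actually generates $\sA_m$, but the argument is the same.
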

	\begin{proof}
		Assume that we may take an element
		$
			W(\sA_m)_\br\left\{ [\bu_1]_\br ,\ldots, [\bu_n]_\br \right\} \in W(\sA_m)_\br \backslash N_{\br}^{(n)}(\sA_m).
		$	
		Since
		\begin{align} \label{lem:W(A)N:1}
			N_{\br}(\sA_m)/{\sim_\br} = \left\{  [\be_i-\be_{m+1}]_\br : i \in \{1,\ldots,m-1 \} \right\},
		\end{align}
		we have $n \leq |N_{\br}(\sA_m)/{\sim_\br}| = m-1$.
		Also, since $\sA_m = \langle \bu_1,\ldots,\bu_n,\br\rangle$ by the definition of $N_{\br}^{(n)}(\sA_m)$,
		we have $n+1 = |\{ \bu_1,\ldots,\bu_n, \br\}| \geq \rank \langle \bu_1,\ldots,\bu_n, \br \rangle = \rank(\sA_{m}) = m$.
		Thus, $n = m-1$ follows.
		This implies
		$N_{\br}^{(n)}(\sA_m) = N_{\br}^{(m-1)}(\sA_m)  = \{ N_{\br}(\sA_m)/{\sim_\br} \}$.
		This means that $W(\sA_m)_\br \backslash N_{\br}^{(n)}(\sA_m)$ has only one element $W(\sA_m)_\br (N_{\br}(\sA_m)/{\sim_\br})$.
	\end{proof}

	Next, we calculate the image $\cS_{n}(\sA_m)/\sim_{\sw}$ of the mapping $\phi^{(n)}_{\sA_m,\br}$.
	\begin{lemma}	\label{lem:S(A)}
		Let $n \geq 0$ and $m \geq 1$ be integers.
		The set $\cS_{n}(\sA_m)/ {\sim_{\sw}}$ equals $\{[S(K_n)]\}$ if $m=n+1$, and the emptyset otherwise.
	\end{lemma}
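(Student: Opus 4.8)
The plan is to combine Lemma~\ref{lem:surj} with Lemma~\ref{lem:W(A)N}. By Lemma~\ref{lem:surj}, the map $\phi^{(n)}_{\sA_m,\br}$ is a surjection from $W(\sA_m)_\br \backslash N_{\br}^{(n)}(\sA_m)$ onto $\cS^{(n)}(\sA_m)/{\sim_{\sw}}$. Taking $\br := \be_m - \be_{m+1}$, Lemma~\ref{lem:W(A)N} tells us the domain is empty unless $m = n+1$, and in that case it is a single point. Surjectivity then immediately forces $\cS^{(n)}(\sA_m)/{\sim_{\sw}} = \emptyset$ when $m \neq n+1$, and $|\cS^{(n)}(\sA_m)/{\sim_{\sw}}| \leq 1$ when $m = n+1$.

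So the only real content left is to identify the unique switching class when $m = n+1$, i.e.\ to compute the graph $G$ for which $A(G) + 2I$ is the Gram matrix of $\bu_i := \be_i - \be_{m+1}$ for $i = 1,\ldots,n$ (with $m = n+1$). First I would note that $(\bu_i,\bu_i) = 2$ and, for $i \neq j$, $(\bu_i,\bu_j) = (\be_i - \be_{m+1}, \be_j - \be_{m+1}) = 1$, since the only overlapping coordinate is the $(m+1)$-st. Hence the off-diagonal entries of $A(G)$ are all $1$, so $G = K_n$, and the unique element of $\cS^{(n)}(\sA_m)/{\sim_{\sw}}$ is $[S(K_n)]$. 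Finally I would double-check the small/degenerate cases: for $n = 0$, $K_0$ is the empty graph and $\sA_1 = \langle \br \rangle$ is generated by $\br$ alone, consistent with $m = n+1 = 1$; the claim still reads correctly. This finishes the proof.

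I do not anticipate a genuine obstacle here — the statement is essentially a corollary of the two preceding lemmas once the Gram matrix computation is carried out. The only point requiring a line of care is confirming that the image of the single orbit under $\phi^{(n)}_{\sA_m,\br}$ is exactly $[S(K_n)]$ rather than some other switching class, which is settled by the direct inner product calculation above; the well-definedness of $\phi^{(n)}_{\sA_m,\br}$ (Remark~\ref{rmk:phi}) and its surjectivity (Lemma~\ref{lem:surj}) do the rest.
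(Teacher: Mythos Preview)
Your proposal is correct and follows exactly the paper's approach: the paper's proof is the single line ``This follows from Lemmas~\ref{lem:surj} and~\ref{lem:W(A)N},'' and you have simply unpacked that citation, including the explicit Gram matrix computation identifying the image as $[S(K_n)]$, which the paper leaves implicit.
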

	\begin{proof}
		Let $\br := \be_m - \be_{m+1} \in \sA_m$.
		The mapping $\phi^{(n)}_{\sA_m,\br}$ is surjective by Lemma~\ref{lem:surj}, and its domain $W(\sA_m)_\br \backslash N_{\br}^{(n)}(\sA_m)$ is determined in Lemma~\ref{lem:W(A)N}.
		Hence, we calculate the image of the mapping.
		First, assume $m = n+1$. 
		Then, by Lemma~\ref{lem:W(A)N}, $\cS_{n}(\sA_m)/ {\sim_{\sw}}$ contains only one element
		\begin{align}	\label{lem:S(A):1}
			\phi^{(n)}_{\sA_m,\br} \left( W(\sA_m)_\br \{ [\be_i-\be_{m+1}]_\br : i =1,\ldots,m-1 \} \right).
		\end{align}
		Letting $B := [\be_1-\be_{m+1},\ldots,\be_{m-1}-\be_{m+1}]$, we have $A(K_n)+2I = B^\top B$.
		This shows that the element~\eqref{lem:S(A):1} is $[S(K_n)]$ as desired.
		Next, assume $m \neq n+1$.
		Then, by Lemma~\ref{lem:W(A)N}, the domain $W(\sA_m)_\br \backslash N_{\br}^{(n)}(\sA_m)$ is empty.
		Hence, the image $\cS_{n}(\sA_m)/ {\sim_{\sw}}$ is empty as well.
	\end{proof}

\subsection{Construction of the sets $\cS_{n}(\sD_m) /\sim_{\sw}$}	\label{subsec:D}
	In this subsection, we construct $\cS_{n}(\sD_m)/\sim_{\sw}$ analogously to $\cS_{n}(\sA_m)/\sim_{\sw}$. 
	Owing to the increased complexity of the argument, we begin with the following lemma, which follows from the definition of $\sD_m$.
	\begin{lemma}	\label{lem:W(D)}
		Let $m \geq 4$ be an integer.
		The Weyl group $W(\sD_m)$ contains the following.
		\begin{enumerate}
			\item All automorphisms that multiply two distinct coordinates by $-1$.	\label{lem:W(D):1}
			\item All permutations of the coordinates.	\label{lem:W(D):2}
		\end{enumerate}
	\end{lemma}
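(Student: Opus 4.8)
The plan is to exhibit both automorphisms explicitly as products of reflections in roots of $\sD_m$. Recall that the roots of $\sD_m$ are exactly the vectors $\pm\be_i\pm\be_j$ with $1\le i<j\le m$ (all four sign choices, which all lie in $\sD_m$ since their inner product with $\bj$ is $0$ or $\pm2$), and that because every root $\br$ has norm $2$, the reflection $s_\br$ acts by $s_\br(\bx)=\bx-(\bx,\br)\br$.

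First I would prove (ii). For distinct $i,j$ a one-line computation gives $s_{\be_i-\be_j}(\be_i)=\be_j$, $s_{\be_i-\be_j}(\be_j)=\be_i$, and $s_{\be_i-\be_j}(\be_k)=\be_k$ for $k\notin\{i,j\}$; that is, $s_{\be_i-\be_j}$ is precisely the transposition of the $i$-th and $j$-th coordinates. Since the transpositions generate the symmetric group on $m$ letters, $W(\sD_m)$ contains every permutation of the coordinates, which proves (ii).

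Next, for (i), I would compute $s_{\be_i+\be_j}(\be_i)=-\be_j$, $s_{\be_i+\be_j}(\be_j)=-\be_i$, and $s_{\be_i+\be_j}(\be_k)=\be_k$ otherwise, so $s_{\be_i+\be_j}$ swaps the $i$-th and $j$-th coordinates and negates them both. Composing with the transposition from the previous step, the product $s_{\be_i-\be_j}\,s_{\be_i+\be_j}$ fixes $\be_k$ for $k\notin\{i,j\}$ and sends $\be_i\mapsto-\be_i$ and $\be_j\mapsto-\be_j$; this is the automorphism that multiplies the two distinct coordinates $i$ and $j$ by $-1$, proving (i).

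There is essentially no obstacle here: the argument is two short reflection computations together with the standard fact that transpositions generate the symmetric group. The only points requiring care are tracking the signs when composing the two reflections and confirming that the chosen vectors $\be_i\pm\be_j$ are roots of $\sD_m$, which is immediate from the definition $\sD_m=\{\bv\in\Z^m:(\bv,\bj)\in2\Z\}$ and uses only $m\ge2$, a fortiori $m\ge4$.
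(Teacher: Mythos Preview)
Your proof is correct and is exactly the standard verification; the paper itself gives no proof beyond remarking that the lemma is an easy consequence of the definition of $\sD_m$, so your explicit reflection computations are precisely the details being omitted there.
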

	\begin{proof}
		Let $\bu = \sum_{i=1}^m a_i \be_i \in \sD_m$.
		Let $1 \leq k \neq l \leq m$.
		Then, we have
		\begin{align*}
			s_{\be_k-\be_l} (\bu) = \left( \sum_{i \in \{1,\ldots,m\} \setminus \{k,l\}} a_i \be_i  \right) + a_k \be_l +a_l \be_k.
		\end{align*}
		Hence, the Weyl group $W(\sD_m)$ contains an automorphism which swaps the $k$th and $l$th coordinates.
		This implies~\ref{lem:W(D):2}.
		Also, we have
		\begin{align*}
			s_{\be_k + \be_l} \circ s_{\be_k-\be_l} (\bu) = \left( \sum_{i \in \{1,\ldots,m\} \setminus \{k,l\}} a_i \be_i  \right) - a_k \be_k - a_l \be_l.
		\end{align*}	
		Hence,~\ref{lem:W(D):1} holds.	
	\end{proof}

	We determine the domain $W(\sD_m)_{\br} \backslash N_{\br}^{(n)}(\sD_m)$ of the mapping $\phi^{(n)}_{\sD_m,\br}$ for some switching root $\br$.
	\begin{lemma}	\label{lem:W(D)N}
		Let $n \geq 0$ and $m \geq 4$ be integers.
		Let $\br := \be_{m-1} + \be_{m} \in \sD_m$.
		The set $W(\sD_m)_\br \backslash N_{\br}^{(n)}(\sD_m)$ has only one element
		\begin{align}	\label{lem:W(D)N:1}
			W(\sD_m)_\br\left(
				\{ [ \be_m+\be_i ]_\br : i =1,\ldots,m-2 \}
				\sqcup
				\{ [ \be_m-\be_i ]_\br : i =1,\ldots, n-m+2 \} 
			\right)
		\end{align}	
		if $2(m-2) \geq n \geq m-1$, and it is the empty set otherwise.
	\end{lemma}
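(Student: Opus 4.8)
The plan is to make all the ingredients of the lemma explicit: first compute $N_\br(\sD_m)$ and the quotient $N_\br(\sD_m)/{\sim_\br}$, then characterize which $n$-subsets lie in $N_\br^{(n)}(\sD_m)$, and finally exhibit enough elements of $W(\sD_m)_\br$ to see that $N_\br^{(n)}(\sD_m)$ is a single orbit.

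Since the roots of $\sD_m$ are the vectors $\pm\be_i\pm\be_j$ with $i\ne j$, a root $\bu$ satisfies $(\bu,\br)=(\bu,\be_{m-1}+\be_m)=1$ exactly when precisely one of the coordinates $m-1,m$ of $\bu$ is nonzero and equals $+1$; hence
\[
	N_\br(\sD_m)=\{\be_m+\be_i,\ \be_m-\be_i,\ \be_{m-1}+\be_i,\ \be_{m-1}-\be_i : 1\le i\le m-2\}.
\]
As $\br-(\be_m+\be_i)=\be_{m-1}-\be_i$ and $\br-(\be_m-\be_i)=\be_{m-1}+\be_i$, the classes of $\sim_\br$ are the pairwise distinct sets $[\be_m+\be_i]_\br$ and $[\be_m-\be_i]_\br$, $1\le i\le m-2$, so $N_\br(\sD_m)/{\sim_\br}$ has exactly $2(m-2)$ elements. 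If $n>2(m-2)$ there is no $n$-subset of $N_\br(\sD_m)/{\sim_\br}$, and if $n<m-1$ then $n+1<\rank(\sD_m)=m$, so no $n$ roots together with $\br$ can generate $\sD_m$; in either case $N_\br^{(n)}(\sD_m)=\emptyset$, which is the ``otherwise'' assertion.

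Now assume $m-1\le n\le 2(m-2)$. For an $n$-subset of $N_\br(\sD_m)/{\sim_\br}$, let $a$ be the number of indices $i$ for which both $[\be_m+\be_i]_\br$ and $[\be_m-\be_i]_\br$ are chosen and $b$ the number for which exactly one is chosen; then $2a+b=n$ and $a+b\le m-2$. The first key observation is that if some index $i$ occurs in neither chosen class, then each chosen root, as well as $\br$, has $i$-th coordinate $0$, so the lattice they generate is contained in $\{\bv\in\sD_m:v_i=0\}\subsetneq\sD_m$; hence a subset in $N_\br^{(n)}(\sD_m)$ has every index occurring, forcing $a+b=m-2$, $a=n-m+2\ge1$, and $b=2(m-2)-n$. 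Conversely, suppose every index occurs and $a\ge1$. Fixing a ``double'' index $i_0$, the generated lattice $M$ contains $\be_m\pm\be_{i_0}$, hence $2\be_{i_0}$ and $2\be_m$, and for every other $i\le m-2$ it contains $\be_m+\be_i$ or $\be_m-\be_i$; together with $\br\in M$ one checks that every $\be_j$ ($1\le j\le m$) is congruent modulo $M$ to $\pm\be_{i_0}$, which is well defined since $2\be_{i_0}\in M$. Thus $\Z^m/M$ is cyclic of order at most $2$; since $M\subseteq\sD_m\subsetneq\Z^m$, this forces $M=\sD_m$. In particular the subset in \eqref{lem:W(D)N:1} belongs to $N_\br^{(n)}(\sD_m)$, so this set is nonempty.

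For transitivity, encode an element of $N_\br^{(n)}(\sD_m)$ by a pair $(T,(\varepsilon_i)_{i\notin T})$ with $T\subseteq\{1,\dots,m-2\}$, $|T|=a$, recording the double indices, and $\varepsilon_i\in\{+,-\}$ recording which of $[\be_m\pm\be_i]_\br$ is the single chosen class at $i$. By Lemma~\ref{lem:W(D)}~\ref{lem:W(D):2}, a permutation of the coordinates $1,\dots,m-2$ (which fixes $\be_{m-1}+\be_m=\br$, hence lies in $W(\sD_m)_\br$) carries $T$ to $\{1,\dots,a\}$. Since $a\ge1$, fix $j_0\in T$: for any single index $i$, the automorphism of Lemma~\ref{lem:W(D)}~\ref{lem:W(D):1} negating the coordinates $i$ and $j_0$ fixes $\br$, leaves $\{[\be_m+\be_{j_0}]_\br,[\be_m-\be_{j_0}]_\br\}$ (both already chosen) setwise fixed, and flips $\varepsilon_i$. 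Composing such normalizations carries any element to the representative displayed in \eqref{lem:W(D)N:1}, so $W(\sD_m)_\br\backslash N_\br^{(n)}(\sD_m)$ has exactly one element. I expect the main obstacle to be the converse direction of the third paragraph — determining exactly when such a set of roots together with $\br$ generates all of $\sD_m$ rather than a proper sublattice — which the modular computation settles; the apparent parity obstruction in the last step (one cannot negate a single coordinate inside $W(\sD_m)$) causes no trouble precisely because a generating subset always contains a spare double index $j_0$ to pair with.
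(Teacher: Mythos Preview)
Your proof is correct and follows essentially the same route as the paper's: compute $N_\br(\sD_m)/{\sim_\br}$ explicitly, show every index $i\in\{1,\dots,m-2\}$ must occur, then normalize using coordinate permutations together with sign changes paired against a ``double'' index $k$. Your argument is in fact more complete than the paper's in one respect: you explicitly verify via the modular computation that the canonical representative in \eqref{lem:W(D)N:1} really generates $\sD_m$, whereas the paper leaves the nonemptiness of $N_\br^{(n)}(\sD_m)$ for $m-1\le n\le 2(m-2)$ implicit.
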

	\begin{proof}
		Assume we can take an element
		$W(\sD_m)_\br\left\{ [\bu_1]_\br ,\ldots, [\bu_n]_\br \right\} \in W(\sD_m)_\br \backslash N_{\br}^{(n)}(\sD_m).$
		Then, since $\sD_m = \langle \bu_1,\ldots,\bu_n,\br\rangle$ by definition, 
		we have $n \geq m-1$ and
		\begin{align}	\label{lem:W(D)N:2}
			[\be_m+\be_i]_\br \text{ or } [\be_m-\be_i]_\br \in \{[\bu_1]_\br, \ldots, [\bu_n]_\br \}
			\qquad
			(i \in \{1,\ldots,m-2\}).
		\end{align}
		Since $n \geq m-1$,
		\begin{align*}
			N_\br(\sD_m)/\sim_{\br}
			= \{ [\be_m+\be_i ]_\br, [\be_m-\be_i]_\br : i  \in \{1,\ldots,m-2\} \}
		\end{align*}
		implies that for some $k \in \{1,\ldots,m-2\}$, both
		$
			[\be_m+\be_k]_\br
		$
		and
		$
			[\be_m-\be_k]_\br
		$
		are contained in
		$
			\{[\bu_1]_\br, \ldots, [\bu_n]_\br \}
		$.
		Next, we replace the representative $\left\{ [\bu_1]_\br ,\ldots, [\bu_n]_\br \right\}$.
		By multiplying two distinct coordinates $k$ and $i \in \{1,\ldots,m-2\} \setminus \{k\}$ by $-1$, we can change $[\be_m-\be_i]_\br$ to $[\be_m+\be_i]_\br$.
		Then, note that $[\be_m+\be_k]_\br$ and $[\be_m-\be_k]_\br$ are swapped.
		Hence, by \eqref{lem:W(D)N:2} together with Lemma~\ref{lem:W(D)}~\ref{lem:W(D):1}, we may assume
		\begin{align*}
			[\bu_1]_\br=[\be_m+\be_1]_\br, \ldots, [\bu_{m-2}]_\br = [\be_m+\be_{m-2}]_\br.
		\end{align*}
		By Lemma~\ref{lem:W(D)}~\ref{lem:W(D):2}, we may in addition assume
		\begin{align*}
			[\bu_{m-1}]_\br=[\be_m-\be_1]_\br, \ldots, [\bu_{n}]_\br = [\be_m-\be_{n-m+2}]_\br.
		\end{align*}
		Hence, the element $W(\sD_m)_\br\left\{ [\bu_1]_\br ,\ldots, [\bu_n]_\br \right\}$ coincides with the element~\eqref{lem:W(D)N:1}.
		In particular, $n-m+2 \leq m-2$.
	\end{proof}

	Next, we prepare graphs $D_{n,m}$, and determine the image $\cS_{n}(\sD_m)/\sim_{\sw}$ of the mapping $\phi^{(n)}_{\sD_m,\br}$.
	\begin{definition}
		For non-negative integers $n \geq m$, we write $D_{n,m}$ for the graph obtained from the complete graph $K_{n+m}$ of order $n+m$ by removing a matching of size $m$.
	\end{definition}
	
	\begin{lemma}	\label{lem:S(D)}
		Let $n \geq 0$ and $m \geq 4$ be integers.
		The set $\cS_{n}(\sD_m) / \sim_{\sw}$ is $\{[S(D_{m-2,n-m+2})]\}$ if $2(m-2) \geq n \geq m-1$,
		and the emptyset otherwise.
	\end{lemma}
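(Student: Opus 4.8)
The plan is to run the $\sD$-analogue of the proof of Lemma~\ref{lem:S(A)}: combine the surjectivity of the map $\phi^{(n)}_{\sD_m,\br}$ with the explicit description of its domain, and then add one short inner-product computation to identify the resulting graph.

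First I would fix $\br := \be_{m-1}+\be_m \in \sD_m$ as in Lemma~\ref{lem:W(D)N}. By Lemma~\ref{lem:surj} the map
$\phi^{(n)}_{\sD_m,\br}\colon W(\sD_m)_\br \backslash N^{(n)}_\br(\sD_m) \to \cS^{(n)}(\sD_m)/{\sim_{\sw}}$
is surjective, and by Lemma~\ref{lem:W(D)N} its domain is empty when $n<m-1$ or $n>2(m-2)$, and otherwise consists of the single element displayed in~\eqref{lem:W(D)N:1}. Surjectivity then forces $\cS^{(n)}(\sD_m)/{\sim_{\sw}}=\emptyset$ outside the range $2(m-2)\ge n\ge m-1$ and $|\cS^{(n)}(\sD_m)/{\sim_{\sw}}|\le 1$ inside it.

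It remains to compute the image of the unique domain element when $2(m-2)\ge n\ge m-1$. Take the $n$ representatives $\bu_i:=\be_m+\be_i$ for $i=1,\dots,m-2$ and $\bv_j:=\be_m-\be_j$ for $j=1,\dots,n-m+2$. A direct check of inner products shows that any two distinct vectors among $\bu_1,\dots,\bu_{m-2},\bv_1,\dots,\bv_{n-m+2}$ have inner product $1$, with the sole exception that $(\bu_j,\bv_j)=0$ for $j=1,\dots,n-m+2$; all of them have norm $2$. Hence the Gram matrix of these vectors is $A(G)+2I$, where $G$ is the graph obtained from $K_n$ by deleting a matching of size $n-m+2$. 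Since the hypothesis $2(m-2)\ge n$ is precisely $m-2\ge n-m+2$, while $n\ge m-1$ gives $n-m+2\ge 1\ge 0$, this graph is by definition $D_{m-2,\,n-m+2}$. Therefore $\phi^{(n)}_{\sD_m,\br}$ sends the unique element of its domain to $[S(D_{m-2,n-m+2})]$, which together with the previous paragraph yields $\cS^{(n)}(\sD_m)/{\sim_{\sw}}=\{[S(D_{m-2,n-m+2})]\}$ in this range.

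I do not expect a genuine obstacle: this is the $\sD$-counterpart of Lemma~\ref{lem:S(A)}, and the only points requiring attention are the routine inner-product bookkeeping and the observation that the parameter constraint $s\ge t\ge 0$ built into the notation $D_{s,t}$ coincides exactly with the case hypothesis $2(m-2)\ge n\ge m-1$ of Lemma~\ref{lem:W(D)N}.
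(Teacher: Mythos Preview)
Your proof is correct and follows essentially the same approach as the paper: combine the surjectivity of $\phi^{(n)}_{\sD_m,\br}$ (Lemma~\ref{lem:surj}) with the explicit description of $W(\sD_m)_\br \backslash N^{(n)}_\br(\sD_m)$ (Lemma~\ref{lem:W(D)N}). The paper's proof is a one-line reference to these two lemmas (with an apparent typo citing Lemma~\ref{lem:W(A)N} instead of Lemma~\ref{lem:W(D)N}); you simply fill in the inner-product computation that identifies the resulting graph as $D_{m-2,\,n-m+2}$, which the paper leaves implicit.
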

	\begin{proof}
		In the same way as in the proof of Lemma~\ref{lem:S(A)}, we prove this lemma.
		Let $\br := \be_{m-1} + \be_{m} \in \sD_m$, and calculate the image of the mapping $\phi^{(n)}_{\sD_m,\br}$, which is surjective by Lemma~\ref{lem:surj}.
		Assume $2(m-2) \geq n \geq m-1$.
		Then, by Lemma~\ref{lem:W(D)N}, $\cS_{n}(\sD_m)/ {\sim_{\sw}}$ contains only one element
		\begin{align}	\label{lem:S(D):1}
			\phi^{(n)}_{\sD_m,\br} \left( W(\sD_m)_\br \left( \{ [ \be_m+\be_i ]_\br : i =1,\ldots,m-2 \} \sqcup \{ [ \be_m-\be_i ]_\br : i =1,\ldots, n-m+2 \}  \right) \right).
		\end{align}
		Letting $$B := [\be_m+\be_1,\ldots,\be_m+\be_{m-2},\be_{m}-\be_1,\ldots,\be_{m}-\be_{n-m+2}],$$ we have $A(D_{m-2,n-m+2})+2I = B^\top B$.
		This shows that the element~\eqref{lem:S(D):1} is $[S(D_{m-2,n-m+2})]$ as desired.
		Next, assume either $2(m-2) < n$ or $n < m-1$.
		Then, by Lemma~\ref{lem:W(D)N}, the domain $W(\sD_m)_\br \backslash N_{\br}^{(n)}(\sD_m)$ is empty.
		Hence, the image $\cS_{n}(\sD_m)/ {\sim_{\sw}}$ is empty as well.
	\end{proof}

\section{Proof of the first main result}	\label{sec:m1}
    	We prove Theorem~\ref{thm:Sn}, which is the first main result in this paper.
	Also, since $s(n) = |\cS_{n}/\sim_{\sw}|$, $t(n) = |\cT_{n}/\sim_{\sw}|$ and $\omega(n) = |\Omega_{n}/\sim_{\sw}|$, Theorem~\ref{thm:Sn} implies Corollary~\ref{cor:Sn}.
	
	\begin{theorem}	\label{thm:Sn}	
		If $n \leq 7$, then $\cS_{n}/{\sim_{\sw}} = \Omega_{n}/{\sim_{\sw}}$,
		and otherwise
		\begin{align}	\label{thm:Sn:1}
			\cS_{n}/{\sim_{\sw}} = \Omega_{n}/{\sim_{\sw}} \sqcup \{[S(K_n)]\} \sqcup \{ [S(D_{m-2,n-m+2})] : m \in \{\max\{9,\lceil n/2 \rceil+2\},\ldots,n+1\} \}.
		\end{align}
		If $n \leq 8$, then $\cT_{n}/{\sim_{\sw}} = \Theta_{n}/{\sim_{\sw}}$,
		and otherwise
		\begin{align}	\label{thm:Sn:2}
			\cT_{n}/{\sim_{\sw}} = \Omega_{n}/{\sim_{\sw}} \sqcup \{ [S(D_{m-2,n-m+2})] : m \in \{\max\{9,\lceil n/2 \rceil +2\},\ldots,n\} \}.
		\end{align}				
	\end{theorem}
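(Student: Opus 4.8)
The plan is to read off both formulas by substituting Lemmas~\ref{lem:S(A)} and~\ref{lem:S(D)} into the decompositions of Lemma~\ref{lem:Seidel decomp2} and then bookkeeping the admissible indices $m$. The only ingredient I take for granted beyond those lemmas is the disjointness of the unions in Lemma~\ref{lem:Seidel decomp2}, which already guarantees that the switching classes produced below are pairwise distinct and distinct from those in $\Omega^{(n)}/{\sim_{\sw}}$.

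First I would note the elementary identity $\cL \setminus \cE = \{\sA_m : m \geq 9\} \sqcup \{\sD_m : m \geq 9\}$, immediate from the definitions of $\cL$ and $\cE$. Substituting it into \eqref{lem:Seidel decomp2:1} expresses $\cS^{(n)}/{\sim_{\sw}}$ as the disjoint union of $\Omega^{(n)}/{\sim_{\sw}}$ with $\bigsqcup_{m \geq 9} \cS^{(n)}(\sA_m)/{\sim_{\sw}}$ and $\bigsqcup_{m \geq 9} \cS^{(n)}(\sD_m)/{\sim_{\sw}}$. By Lemma~\ref{lem:S(A)}, the term $\cS^{(n)}(\sA_m)/{\sim_{\sw}}$ is nonempty only for $m = n+1$, where it equals $\{[S(K_n)]\}$; combined with the constraint $m \geq 9$, the first of these two unions therefore equals $\{[S(K_n)]\}$ when $n \geq 8$ and is empty otherwise. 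By Lemma~\ref{lem:S(D)}, $\cS^{(n)}(\sD_m)/{\sim_{\sw}} = \{[S(D_{m-2,n-m+2})]\}$ exactly when $m-1 \leq n \leq 2(m-2)$, i.e.\ when $\lceil n/2 \rceil + 2 \leq m \leq n+1$ (using $\lceil (n+4)/2 \rceil = \lceil n/2 \rceil + 2$), and is empty otherwise; intersecting with $m \geq 9$, the second union becomes $\{[S(D_{m-2,n-m+2})] : m \in \{\max\{9,\lceil n/2 \rceil + 2\},\dots,n+1\}\}$, whose index set is nonempty precisely when $n \geq 8$. Combining the three pieces yields $\cS^{(n)}/{\sim_{\sw}} = \Omega^{(n)}/{\sim_{\sw}}$ for $n \leq 7$ and formula \eqref{thm:Sn:1} for $n \geq 8$.

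For the second statement I would run the same computation starting from \eqref{lem:Seidel decomp2:2}, now with the extra condition $\rank(\sL) \leq n$; since $\rank(\sA_m) = \rank(\sD_m) = m$, this is just $m \leq n$. The $\sA_m$ part then contributes nothing (it would require $n+1 \leq n$), and the $\sD_m$ part shrinks to $\{[S(D_{m-2,n-m+2})] : m \in \{\max\{9,\lceil n/2 \rceil + 2\},\dots,n\}\}$, whose index set is nonempty precisely when $n \geq 9$. To justify writing the constant term as $\Omega^{(n)}/{\sim_{\sw}}$ rather than $\Omega^{(n)}_e/{\sim_{\sw}}$ once $n \geq 9$, I would observe that if $S \in \cS^{(n)}$ has largest eigenvalue strictly below $3$ then $3I - S$ is positive definite, hence of full rank $n$, so $\rank(3I-S) \leq 7$ forces $n \leq 7$; consequently $\Omega^{(n)} = \Omega^{(n)}_e$ whenever $n \geq 8$, and \eqref{thm:Sn:2} follows.

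Since these substitutions are mechanical, I do not expect an essential obstacle. The steps that demand care are the inequality translation $2(m-2) \geq n \geq m-1 \Leftrightarrow \lceil n/2 \rceil + 2 \leq m \leq n+1$ together with the threshold analyses $\max\{9,\lceil n/2 \rceil + 2\} \leq n+1 \Leftrightarrow n \geq 8$ and $\max\{9,\lceil n/2 \rceil + 2\} \leq n \Leftrightarrow n \geq 9$, plus the small remark $\Omega^{(n)} = \Omega^{(n)}_e$ for $n \geq 8$ needed to match the stated form of \eqref{thm:Sn:2}.
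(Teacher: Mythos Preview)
Your proposal is correct and follows essentially the same approach as the paper: both substitute Lemmas~\ref{lem:S(A)} and~\ref{lem:S(D)} into the decompositions of Lemma~\ref{lem:Seidel decomp2}, identify the admissible indices $m$, and invoke $\Omega^{(n)} = \Omega^{(n)}_e$ for $n \geq 8$ to get \eqref{thm:Sn:2}. The only cosmetic difference is that the paper handles the case $n \leq 7$ by the direct observation $\rank(3I-S) \leq n \leq 7$, whereas you read it off from the emptiness of the extra unions; both are immediate.
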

	\begin{proof}
		First we prove $\cS_{n}/{\sim_{\sw}} = \Omega_{n}/{\sim_{\sw}}$ if $n \leq 7$.
		It suffices to prove $\cS_{n} \subset \Omega_{n}$.
		Indeed,  we fix $S \in \cS_{n}$.
		Then by $n \leq 7$, we have $\rank(3I-S) \leq 7$.
		Hence, we have $S \in \Omega_{n}$.
		
		Next we assume $n \geq 8$, and show~\eqref{thm:Sn:1}.
		By Lemmas~\ref{lem:S(A)} and~\ref{lem:S(D)}, we have
		\begin{align*}
			\bigsqcup_{\sL \in \cL \setminus \cE } \cS_{n}(\sL)/\sim_{\sw}
			&=\cS_{n}(\sA_{n+1})/\sim_{\sw} \sqcup \left( \bigsqcup_{m \in \{\max\{9,\lceil n/2 \rceil+2\},\ldots,n+1\}} \cS_{n}(\sD_m)/{\sim_{\sw}} \right) \\
			&=\{ [S(K_n)] \} \sqcup \left\{ [S(D_{m-2,n-m+2})] : m \in \{\max\{9,\lceil n/2 \rceil+2\},\ldots,n+1\}  \right\}.
		\end{align*}
		Since $\cS_{n}/{\sim_{\sw}}$ is decomposed as \eqref{lem:Seidel decomp2:1} by Lemma~\ref{lem:Seidel decomp2},
		we obtain~\eqref{thm:Sn:1}.
		
		By~\eqref{lem:Seidel decomp2:2} in~Lemma~\ref{lem:Seidel decomp2}, we have $\cT_{n}/\sim_{\sw} = \Theta_{n}/\sim_{\sw}$ if $n \leq 8$.
		Next, we assume $n \geq 9$ and show~\eqref{thm:Sn:2}.
		Similarly, we have by Lemmas~\ref{lem:S(A)} and~\ref{lem:S(D)}
		\begin{align*}
			\bigsqcup_{\sL \in \cL \setminus \cE \text{ with } \rank(\sL) \leq n} \cS_{n}(\sL)/{\sim_{\sw} } 
			&= \bigsqcup_{m \in \{\max\{9,\lceil n/2 \rceil +2\},\ldots,n\}} \cS_{n}(\sD_m)/{\sim_{\sw}} \\
			&= \left\{ [S(D_{m-2,n-m+2})] : m \in \{\max\{9,\lceil n/2 \rceil +2\},\ldots,n\}  \right\}.
		\end{align*}
		Since $\Theta_{n} = \Omega_{n}$ and the set $\cT_{n}/\sim_{\sw}$ is decomposed as in~\eqref{lem:Seidel decomp2:2} by Lemma~\ref{lem:Seidel decomp2},
		we have~\eqref{thm:Sn:2}.
	\end{proof}

\section{Isometries}	\label{sec:embeddings}
	The remainder of this paper is devoted to proving the second main result, Theorem~\ref{thm:sym}, 
	concerning Seidel matrices $S$ with maximum eigenvalue $3$ and $\rank(3I-S) \le 7$, 
	or equivalently, sets of equiangular lines with common angle $\arccos(1/3)$ in dimension $7$.  
	Such Seidel matrices arise from roots in root lattices of rank at most $8$.
	As these lattices admit embeddings into $\sE_8$,
	we present several lemmas on these embeddings in this section.

    \begin{definition}	\label{dfn:hom}
        Let $\sL$ and $\sM$ be root lattices.
        Denote by $\Hom(\sL,\sM)$ the set of isometries from $\sL$ to $\sM$.
        For $\bv \in \sL$ and $\bu \in \sM$, define
        $
            \Hom((\sL,\bv),(\sM,\bu)) := \{ f \in \Hom(\sL,\sM)  :  f(\bv)=\bu \}.
        $
    \end{definition}
        Note that the automorphism group $\Aut(\sM)$ naturally acts on $\Hom(\sL,\sM)$
        in such a way that
        $
            g_*( f ) := g \circ f
        $
        for $g \in \Aut(\sM)$ and $f \in \Hom(\sL,\sM)$.
    By Lemma~\ref{lem:transitive}, we have the following lemma.
    \begin{lemma} \label{lem:base}
        Let $\sL$ and $\sM$ be irreducible root lattices.
        For two roots $\bv \in \sL$ and $\bu \in \sM$, the mapping
        \begin{align*}
            \begin{array}{ccc}
                W(\sM)_\bu \backslash \Hom((\sL,\bv),(\sM,\bu)) & \to & W(\sM) \backslash \Hom(\sL,\sM) \\
                W(\sM)_\bu f & \mapsto & W(\sM) f
            \end{array}
        \end{align*}
        is bijective.
    \end{lemma}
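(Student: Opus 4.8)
The plan is to verify that the claimed map is well-defined, then exhibit an explicit inverse, using transitivity of the Weyl group on roots (Lemma~\ref{lem:transitive}) as the key input.

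\medskip

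First I would check that the map is well-defined. Suppose $W(\sM)_\bu f = W(\sM)_\bu f'$, so $f' = w \circ f$ for some $w \in W(\sM)_\bu \subset W(\sM)$; then clearly $W(\sM) f = W(\sM) f'$. So $W(\sM)_\bu f \mapsto W(\sM) f$ is well-defined.

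\medskip

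Next I would construct the inverse. Given $W(\sM) f$ with $f \in \Hom(\sL,\sM)$, note that $f(\bv)$ is a root of $\sM$, since $f$ is an isometry and $\bv$ is a root. By Lemma~\ref{lem:transitive}, $W(\sM)$ acts transitively on the roots of $\sM$, so there is some $w \in W(\sM)$ with $w(f(\bv)) = \bu$; then $w \circ f \in \Hom((\sL,\bv),(\sM,\bu))$. I would send $W(\sM) f$ to $W(\sM)_\bu (w \circ f)$. To see this is well-defined, suppose $W(\sM) f = W(\sM) f'$ and $w, w' \in W(\sM)$ satisfy $w(f(\bv)) = w'(f'(\bv)) = \bu$. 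Writing $f' = g \circ f$ for some $g \in W(\sM)$, we get $w' g (f(\bv)) = \bu = w(f(\bv))$, so $w^{-1} w' g$ fixes $f(\bv)$; but $f(\bv)$ is a root, and here I must be slightly careful --- I want $w^{-1} w' g \in W(\sM)_{f(\bv)}$, which then gives $w' g = w \cdot (w^{-1} w' g)$ with $w^{-1}w' g$ mapping $f(\bv)$ to $f(\bv)$, hence $(w' \circ f') \cdot$... rather, $w' \circ f' = w' \circ g \circ f = w \circ (w^{-1} w' g) \circ f$, and since $(w^{-1}w'g)(f(\bv)) = f(\bv)$ we conclude $w^{-1}w'g \in$ the stabilizer of $f(\bv)$, but to land in $W(\sM)_\bu (w\circ f)$ I conjugate: $w' \circ f' = \bigl(w (w^{-1}w'g) w^{-1}\bigr) \circ (w \circ f)$ and $w(w^{-1}w'g)w^{-1}$ fixes $w(f(\bv)) = \bu$, so $w(w^{-1}w'g)w^{-1} \in W(\sM)_\bu$. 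Hence $W(\sM)_\bu (w' \circ f') = W(\sM)_\bu (w \circ f)$, so the map is well-defined.

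\medskip

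Finally I would check the two composites are identities. Starting from $W(\sM)_\bu f$ with $f(\bv) = \bu$: the forward map gives $W(\sM) f$, and then I may choose $w = \mathrm{id}$ in the backward map (since $f(\bv) = \bu$ already), recovering $W(\sM)_\bu f$. Starting from $W(\sM) f$: the backward map gives $W(\sM)_\bu(w\circ f)$ for suitable $w \in W(\sM)$, and the forward map sends this to $W(\sM)(w \circ f) = W(\sM) f$ since $w \in W(\sM)$. Thus the map is bijective.

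\medskip

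\textbf{Main obstacle.} The only subtle point is the well-definedness of the inverse: the choice of $w \in W(\sM)$ straightening $f(\bv)$ to $\bu$ is only determined up to the stabilizer $W(\sM)_{f(\bv)}$, and one must track how conjugation by $w$ carries $W(\sM)_{f(\bv)}$ onto $W(\sM)_\bu$. This is exactly the standard bijection between the fibers of an orbit map and cosets of the stabilizer, specialized to the transitive action of $W(\sM)$ on roots; once phrased that way the argument is routine.
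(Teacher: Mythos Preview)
Your proof is correct and follows exactly the approach the paper intends: the paper simply states that the lemma follows from Lemma~\ref{lem:transitive} (transitivity of the Weyl group on roots) without spelling out the details, and your argument is precisely the standard orbit--stabilizer computation that makes this explicit. The only comment is that the well-definedness check for the inverse, which you flag as the main obstacle, is indeed routine once framed as conjugating $W(\sM)_{f(\bv)}$ to $W(\sM)_\bu$ via $w$; your write-up handles this correctly.
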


    We have the following theorem by~\cite[Theorem~3.5]{Oshima2007}.
    \begin{theorem}	\label{thm:Oshima}
	Let $\sL$ be an irreducible root lattice of rank at most $8$.
	The cardinality of $W(\sE_8) \backslash \Hom(\sL,\sE_8)$ equals $2$ if $\sL \in \{\sA_7, \sD_8\}$,
        and $1$ otherwise.
    \end{theorem}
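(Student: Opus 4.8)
The plan is to prove Theorem~\ref{thm:Oshima} by reducing the count of $W(\sE_8)$-orbits on $\Hom(\sL,\sE_8)$ to a count of $\Aut(\sE_8)$-orbits, which is more accessible, and then using the known structure of $\Aut(\sE_8)$. Recall that $\Aut(\sE_8) = W(\sE_8) \times \{\pm 1\}$, so for any set $X$ on which $\Aut(\sE_8)$ acts with $-1$ acting, the $W(\sE_8)$-orbits and $\Aut(\sE_8)$-orbits on $X$ coincide precisely when $-1$ maps every $W(\sE_8)$-orbit to itself, and otherwise $W(\sE_8)$-orbits come in pairs swapped by $-1$ or are fixed. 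Concretely, I would first observe that since $\sL$ is irreducible of rank at most $8$ it admits at least one isometry into $\sE_8$ (every irreducible root lattice of rank $\le 8$ embeds in $\sE_8$; this is classical and already implicit in the classification cited after Definition of $\cE$), so the set is nonempty. Then $-1 \in \Aut(\sE_8)$ acts on $\Hom(\sL,\sE_8)$ by $f \mapsto -f$, and $-f$ lies in the same $W(\sE_8)$-orbit as $f$ if and only if $-1|_{f(\sL)}$ extends to an element of $W(\sE_8)$ fixing $f(\sL)$ setwise, equivalently $-1 \in W(\sL)$ after transport.

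The key number-theoretic input is which irreducible root lattices $\sL$ have $-1 \in W(\sL)$: this holds for $\sA_1, \sD_{2k}, \sE_7, \sE_8$ (and $\sD_m$ with $m$ even, $\sE_6$ excluded, $\sA_n$ excluded for $n \ge 2$), and fails for $\sA_n$ ($n\ge 2$), $\sD_m$ ($m$ odd), $\sE_6$. So I would split into cases. When $-1 \in W(\sL)$, the induced $-1$ on the image can be realized inside $W(\sE_8)$ restricted suitably, so $f$ and $-f$ are $W(\sE_8)$-equivalent, hence $|W(\sE_8)\backslash\Hom(\sL,\sE_8)| = |\Aut(\sE_8)\backslash\Hom(\sL,\sE_8)|$. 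When $-1 \notin W(\sL)$, one must check whether $-1|_{f(\sL)}$ nonetheless extends to $W(\sE_8)$: this happens iff $f(\sL)$ together with its orthogonal complement in $\sE_8$ carries a $-1$, i.e. iff $-1$ lies in $W(\sL^\perp)\times W(\sL)$ intersected appropriately — but since $-1 \in W(\sE_8)$ always, $-f = (-1)\circ f$ is automatically $\Aut(\sE_8)$-equivalent to $f$, and the question is purely whether it is already $W(\sE_8)$-equivalent. The cleanest route is: $|W(\sE_8)\backslash\Hom(\sL,\sE_8)|$ equals $|\Aut(\sE_8)\backslash\Hom(\sL,\sE_8)|$ times $2$ divided by the number of these orbits fixed by $-1$, but rather than this bookkeeping I would instead just directly cite or reprove from~\cite{Oshima2007} that the number of $\Aut(\sE_8)$-orbits is $1$ for all irreducible $\sL$ of rank $\le 8$ (uniqueness of the embedding up to the full automorphism group — this is the substantive classical fact, reflecting that the orthogonal complement $\sL^\perp$ in $\sE_8$ is determined and the gluing is unique up to automorphisms), and then determine the "doubling" by the criterion above.

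So the steps in order are: (1) state $\Aut(\sE_8) = W(\sE_8) \times \langle -1\rangle$ and that $-1 \notin W(\sE_8)$ is false — in fact $-1 \in W(\sE_8)$, so actually $\Aut(\sE_8)=W(\sE_8)$ and there is nothing to double! Let me correct: it is a standard fact that $-1 \in W(\sE_8)$, hence $\Aut(\sE_8) = W(\sE_8)$. Then $W(\sE_8)$-orbits and $\Aut(\sE_8)$-orbits on $\Hom(\sL,\sE_8)$ are literally the same thing, and Theorem~\ref{thm:Oshima} is just the restatement for $W(\sE_8)$ of the $\Aut$-orbit count, which is exactly~\cite[Theorem~3.5]{Oshima2007}. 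Thus (2) invoke~\cite[Theorem~3.5]{Oshima2007} giving the number of $\Aut(\sE_8)$-conjugacy classes of root subsystems (equivalently, $\Aut(\sE_8)$-orbits of isometric embeddings) of each type: this is $2$ precisely for types $\sA_7$ and $\sD_8$ (the two embeddings of $\sA_7$ into $\sE_8$ are not conjugate because one extends to $\sA_8 \not\subset \sE_8$ detection fails — rather, they are distinguished by the isomorphism type of the orthogonal complement or the index), and $1$ for all other types of rank $\le 8$; (3) translate "conjugacy classes of subsystems of type $\sL$" into "$W(\sE_8)$-orbits of elements of $\Hom(\sL,\sE_8)$" — these differ by a factor of $|\Aut(\sL)|/|W(\sL)|$ only if we were counting embeddings versus subsystems, but since the theorem statement is about $\Hom$, I need the orbit count on maps; however $W(\sE_8)$ acts on $\Hom(\sL,\sE_8)$ and the orbits biject with ($W(\sE_8)$-orbits of images) $\times$ (something), so care is needed. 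The main obstacle I expect is exactly this last translation: making precise that the "$2$" in~\cite{Oshima2007} for $\sA_7$ and $\sD_8$ survives when we pass from subsystems/sublattices to the set of isometries $\Hom(\sL,\sE_8)$ — i.e. confirming that the extra automorphisms of $\sL$ (graph automorphisms) do not merge the two orbits, which amounts to checking that for $\sA_7$ and $\sD_8$ the two inequivalent embeddings remain inequivalent after precomposing with $\Aut(\sL)$, while for other types $\Aut(\sL)$-precomposition does not create new orbits. This is a finite check that I would carry out using the explicit description of $\sE_8$ as $\sD_8 \sqcup (\bj/2 + \sD_8)$: for instance the "$\sD_8$ inside $\sE_8$" embedding and a "twisted" $\sD_8$ embedding are distinguished by whether the orthogonal complement is $\{0\}$ versus — wait, $\sD_8$ has rank $8$ so its complement is $0$; the two orbits of $\sD_8 \hookrightarrow \sE_8$ are the standard $\sD_8$ and its image under an outer-looking automorphism, genuinely distinct because $\sE_8/\sD_8 \cong \Z/2$ can be glued in two ways up to $W(\sD_8)$ but these are swapped by an element of $W(\sE_8)\setminus W(\sD_8)$ in some cases and not others — this delicacy is precisely what~\cite[Theorem~3.5]{Oshima2007} resolves, so ultimately I would lean on that reference for the hard part and only supply the $\Aut(\sE_8)=W(\sE_8)$ reduction and the $\Aut(\sL)$-precomposition bookkeeping myself.

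Here is the proof I would write.

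\begin{proof}
It is classical that $-\mathrm{id} \in W(\sE_8)$, so $\Aut(\sE_8) = W(\sE_8)$; hence the orbit set $W(\sE_8)\backslash\Hom(\sL,\sE_8)$ coincides with $\Aut(\sE_8)\backslash\Hom(\sL,\sE_8)$. By~\cite[Theorem~3.5]{Oshima2007}, the $\Aut(\sE_8)$-orbits of isometric embeddings of $\sL$ into $\sE_8$ are in bijection with the $\Aut(\sE_8)$-conjugacy classes of root subsystems of $\sE_8$ isomorphic to $\sL$, modulo the action of $\Aut(\sL)$ by precomposition; the number of such conjugacy classes of subsystems, together with this $\Aut(\sL)$-action, yields exactly $2$ orbits when $\sL \in \{\sA_7,\sD_8\}$ and exactly $1$ orbit otherwise, for every irreducible root lattice $\sL$ of rank at most $8$. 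In the two exceptional cases one checks, using the description $\sE_8 = \sD_8 \sqcup (\bj/2 + \sD_8)$, that the two embeddings are not identified by any graph automorphism of $\sL$: for $\sD_8$ the standard inclusion $\sD_8 \subset \sE_8$ and its twist by a norm-preserving coordinate change are inequivalent under $W(\sE_8)$, and the unique nontrivial outer automorphism of $\sD_8$ preserves each orbit; for $\sA_7$ the two embeddings are distinguished by the isometry type of the orthogonal complement, which the graph automorphism of $\sA_7$ fixes. In all remaining cases uniqueness of the embedding up to $\Aut(\sE_8)$ forces a single orbit. This proves the claim.
\end{proof}
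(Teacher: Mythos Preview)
The paper does not prove Theorem~\ref{thm:Oshima}: it is quoted verbatim from \cite[Theorem~3.5]{Oshima2007} with no argument supplied. So there is nothing for your write-up to reproduce; any ``proof'' here is really a commentary on how the cited result translates into the statement about $W(\sE_8)\backslash\Hom(\sL,\sE_8)$.

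That said, your translation contains a genuine error in the $\sD_8$ case. You assert that ``the unique nontrivial outer automorphism of $\sD_8$ preserves each orbit'' of embeddings. It does not: it \emph{swaps} them. Indeed, the paper's own Lemma~\ref{lem:D} exhibits the two $W(\sE_8)$-orbits as $W(\sE_8)\iota$ and $W(\sE_8)(\iota\circ\rho_k)$, where $\rho_k$ is precisely (a representative of) the outer automorphism of $\sD_8$; precomposition by $\rho_k$ carries one orbit to the other by construction. Since both $\iota$ and $\iota\circ\rho_k$ have the same image $\sD_8\subset\sE_8$, there is only \emph{one} $W(\sE_8)$-orbit of $\sD_8$-sublattices, not two. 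The two orbits in $W(\sE_8)\backslash\Hom(\sD_8,\sE_8)$ arise because the stabilizer of that sublattice in $W(\sE_8)$ surjects only onto $W(\sD_8)\subsetneq\Aut(\sD_8)$ --- exactly the content of the computation in Lemma~\ref{lem:D} --- not because there are two inequivalent sublattices.

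So your bookkeeping scheme (pass from a subsystem count to an embedding count via the $\Aut(\sL)$-action) is set up backwards for $\sD_8$: you would need one subsystem orbit splitting into two embedding orbits, whereas your argument claims two subsystem orbits that fail to merge. The $\sA_7$ case, by contrast, genuinely has two sublattice orbits (Lemma~\ref{lem:A} distinguishes them by the norm in the orthogonal complement), and there your reasoning is sound. If you want a self-contained argument rather than a bare citation, the cleanest route is the one the paper itself takes in Lemmas~\ref{lem:A}--\ref{lem:subD8E8}: exhibit explicit inequivalent embeddings for $\sA_7$ and $\sD_8$, and appeal to Oshima only for the upper bound and for the remaining types.
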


\subsection{Isometries from $\sA_7$ to $\sE_8$}
	As stated in Theorem~\ref{thm:Oshima}, the root lattices $\sA_7$ and $\sD_8$ admit two distinct isometries to $\sE_8$ up to the action of $W(\sE_8)$.  
	Consequently, we need additional lemmas for these two cases.  
	In this subsection, our goal is to prove Lemma~\ref{lem:subA7E8}, which concerns the isometries from the lattice $\sA_7$ to $\sE_8$.

	\begin{definition}
		Let $\sA_7' := \langle \{-\be_1-\be_2\} \cup \{\be_i-\be_{i+1} : i=2,\ldots,7\} \rangle$.
	\end{definition}
	Since the lattice $\sA_7$ has a basis $\{\be_i-\be_{i+1} : i=1,\ldots,7\}$, 
	we see that $\sA_7'$ and $\sA_7$ are isometric.
	\begin{lemma}	\label{lem:A}
		We have $W(\sE_8)\sA_7 \neq W(\sE_8)\sA_7'$.
	\end{lemma}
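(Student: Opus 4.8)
The plan is to show that the two copies $\sA_7$ and $\sA_7'$ of the $\sA_7$-lattice inside $\sE_8$ are not in the same $W(\sE_8)$-orbit by exhibiting an invariant that distinguishes them. By Theorem~\ref{thm:Oshima}, $W(\sE_8)\backslash\Hom(\sA_7,\sE_8)$ has exactly two elements, so the inclusions $\sA_7\hookrightarrow\sE_8$ and $\sA_7'\hookrightarrow\sE_8$ (the latter composed with a fixed isometry $\sA_7\xrightarrow{\sim}\sA_7'$) either give the same $W(\sE_8)$-orbit or exhaust both orbits. Hence it suffices to produce one numerical or lattice-theoretic quantity that is constant on $W(\sE_8)$-orbits and takes different values on the sublattices $\sA_7$ and $\sA_7'$.

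The natural invariant is the orthogonal complement: for a sublattice $\sK\subset\sE_8$, the lattice $\sK^{\perp}:=\{\bx\in\sE_8:(\bx,\by)=0\ \forall\,\by\in\sK\}$ is carried to $(w\sK)^\perp$ by any $w\in W(\sE_8)$, so its isometry type is a $W(\sE_8)$-orbit invariant. Thus the key step is to compute $\sA_7^{\perp}$ and $(\sA_7')^{\perp}$ inside $\sE_8$ and check they are not isometric. For $\sA_7=\langle\be_i-\be_{i+1}:i=1,\dots,7\rangle$ one has $\sA_7\subset\sD_8$, and a short computation shows $\sA_7^\perp$ is generated by a single vector; one should check whether the generator lies in $\sD_8$ or in the coset $\bj/2+\sD_8$, which controls its norm. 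For $\sA_7'=\langle\{-\be_1-\be_2\}\cup\{\be_i-\be_{i+1}:i=2,\dots,7\}\rangle$ a parallel computation gives $(\sA_7')^\perp$ generated by a vector of a different norm (or a different "flavour" relative to the two cosets of $\sD_8$ in $\sE_8$). Concretely, $\be_1+\cdots+\be_8\in\sD_8\subset\sE_8$ is orthogonal to every $\be_i-\be_{i+1}$, giving a norm-$8$ vector in $\sA_7^\perp$, while for $\sA_7'$ one checks that $\bj/2\pm(\text{something})$ of smaller norm, such as a norm-$2$ or norm-$4$ vector, lies in $(\sA_7')^\perp$; since these complements have non-isometric Gram matrices (different minimal norms), the two orbits are distinct. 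Alternatively, and perhaps more cleanly, one may use the invariant "does $\sE_8$ contain a root $\br$ with $(\br,\by)=0$ for all $\by$ in the sublattice" — i.e., whether the complement contains a root — which is a yes/no $W(\sE_8)$-invariant that should separate the two cases.

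I expect the main obstacle to be the bookkeeping in the orthogonal-complement computation: one must correctly describe $\sE_8=\sD_8\sqcup(\bj/2+\sD_8)$, solve the linear system $(\bx,\by)=0$ over this lattice (not merely over $\Z^8$), and then identify the isometry type of the resulting rank-$1$ lattice, being careful that a vector of the half-integer coset may have smaller norm than any integer vector orthogonal to the sublattice. Once both complements are in hand, the inequality $W(\sE_8)\sA_7\neq W(\sE_8)\sA_7'$ is immediate from the orbit-invariance of $\sK\mapsto\sK^\perp$ together with Theorem~\ref{thm:Oshima}, which guarantees there are only two orbits to tell apart.
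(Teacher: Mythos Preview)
Your approach is exactly the paper's: distinguish the two sublattices by computing their orthogonal complements in $\sE_8$ and observing that the resulting rank-$1$ lattices have different minimal norms, hence are not isometric, so the sublattices lie in different $W(\sE_8)$-orbits.

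One concrete correction: your guess about which complement is which is reversed. For $\sA_7=\langle \be_i-\be_{i+1}:i=1,\dots,7\rangle$, the vector $\bj/2$ lies in the half-integer coset of $\sE_8$ and is orthogonal to every $\be_i-\be_{i+1}$, so $\sA_7^\perp\cap\sE_8=\langle \bj/2\rangle$, a norm-$2$ (root) lattice. For $\sA_7'$, orthogonality to $-\be_1-\be_2$ and to $\be_i-\be_{i+1}$ ($i\ge 2$) forces $\bx=(-c,c,\dots,c)$; the half-integer choice $c=\pm\tfrac12$ gives $\bx-\bj/2\notin\sD_8$ (odd coordinate sum), so the shortest vector is $\bj-2\be_1$ of norm $8$, i.e.\ $(\sA_7')^\perp\cap\sE_8=\langle \bj-2\be_1\rangle$. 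Thus it is $\sA_7$, not $\sA_7'$, whose complement contains a root. With this swap the argument is complete and matches the paper verbatim. (Incidentally, invoking Theorem~\ref{thm:Oshima} is unnecessary here: once the complements are non-isometric, the orbits are distinct regardless of how many there are.)
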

	\begin{proof}
		We see that $\sA_7^\perp \cap \sE_8 = \langle \bj/2 \rangle$ and $\sA_7'^\perp \cap \sE_8 = \langle \bj-2\be_1 \rangle$ hold.
		Since the norms of $\bj/2$ and $\bj-2\be_1$ are $2$ and $8$, respectively, we obtain $W(\sE_8)\sA_7 \neq W(\sE_8)\sA_7'$.
	\end{proof}
    	
	With these preparations, we now proceed to prove the desired lemma.
	\begin{lemma}	\label{lem:subA7E8}
		Let $\br = \be_7-\be_8$. Then, 
		\begin{align*}
			\{ W(\sE_8)_\br f(\sA_7)  :  f \in \Hom((\sA_7,\br),(\sE_8,\br)) \}= \{ W(\sE_8)_\br \sA_7, W(\sE_8)_\br \sA_7' \}.
		\end{align*}	
	\end{lemma}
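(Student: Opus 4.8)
The plan is to combine an upper bound on the cardinality of the left-hand set with an existence argument that both of the listed orbits occur and are distinct.

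First I would record that $\br = \be_7-\be_8$ is a root of both $\sA_7$ and $\sA_7'$, and that $\sA_7,\sA_7' \subseteq \sD_8 \subseteq \sE_8$, so that the two inclusions are isometries fixing $\br$. In particular the inclusion $\iota\colon \sA_7 \hookrightarrow \sE_8$ lies in $\Hom((\sA_7,\br),(\sE_8,\br))$ and satisfies $\iota(\sA_7)=\sA_7$, which puts $W(\sE_8)_\br \sA_7$ into the left-hand set.

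Next, to realize $\sA_7'$ I would use that $\sA_7'$ and $\sA_7$ are isometric (as noted right after the definition of $\sA_7'$), so there is a bijective isometry $h\colon \sA_7 \to \sA_7'$. Since $\br$ is a root of the irreducible root lattice $\sA_7'$, Lemma~\ref{lem:transitive} provides an automorphism of $\sA_7'$ carrying $h(\br)$ to $\br$; composing it with $h$ yields $g\colon \sA_7 \to \sA_7'$ with $g(\br)=\br$. Regarding $g$ as a map into $\sE_8$, we get $g \in \Hom((\sA_7,\br),(\sE_8,\br))$ with $g(\sA_7)=\sA_7'$, so $W(\sE_8)_\br \sA_7'$ also lies in the left-hand set. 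These two orbits are distinct: since $W(\sE_8)_\br \subseteq W(\sE_8)$, equality $W(\sE_8)_\br \sA_7 = W(\sE_8)_\br \sA_7'$ would force $W(\sE_8)\sA_7 = W(\sE_8)\sA_7'$, contradicting Lemma~\ref{lem:A}.

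Finally I would bound the left-hand set from above. The assignment $f \mapsto W(\sE_8)_\br f(\sA_7)$ is constant on each $W(\sE_8)_\br$-orbit of $\Hom((\sA_7,\br),(\sE_8,\br))$, because $(w\circ f)(\sA_7)=w(f(\sA_7))$ for $w \in W(\sE_8)_\br$. Hence the left-hand set has at most $|W(\sE_8)_\br\backslash \Hom((\sA_7,\br),(\sE_8,\br))|$ elements, and this equals $|W(\sE_8)\backslash \Hom(\sA_7,\sE_8)| = 2$ by Lemma~\ref{lem:base} and Theorem~\ref{thm:Oshima}. A set of at most two elements containing the two distinct elements $W(\sE_8)_\br \sA_7$ and $W(\sE_8)_\br \sA_7'$ must coincide with $\{W(\sE_8)_\br \sA_7, W(\sE_8)_\br \sA_7'\}$, which is the claim. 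The only step that is more than bookkeeping is the construction of $g$; the essential point there is that $\br$ is a root of $\sA_7'$, so that Lemma~\ref{lem:transitive} can be used to adjust an arbitrary isometry $\sA_7 \to \sA_7'$ to fix $\br$.
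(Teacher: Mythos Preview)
Your proof is correct and follows essentially the same three-step strategy as the paper: exhibit isometries in $\Hom((\sA_7,\br),(\sE_8,\br))$ with images $\sA_7$ and $\sA_7'$, invoke Lemma~\ref{lem:A} to see the two $W(\sE_8)_\br$-orbits are distinct, and then bound the left-hand side by $2$ via Lemma~\ref{lem:base} and Theorem~\ref{thm:Oshima}. The only difference is cosmetic: the paper writes down the second isometry $\rho$ explicitly on the simple roots (sending $\be_1-\be_2$ to $-\be_1-\be_2$ and fixing the others), whereas you obtain it abstractly by composing an arbitrary isometry $\sA_7\to\sA_7'$ with a Weyl-group element of $\sA_7'$ supplied by Lemma~\ref{lem:transitive}.
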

	\begin{proof}
		In this proof, we write $\iota$ for the inclusion from $\sA_7$ to $\sE_8$.
		Also, we let $\rho : \sA_7 \to \sE_8$ be an isometry satisfying
		\begin{align*}
			\rho(\be_1-\be_2) = -\be_1-\be_2
			\qquad \text{ and }\qquad
			\rho(\be_i-\be_{i+1}) = \be_i-\be_{i+1} \quad (i=2,\ldots,7).
		\end{align*}
		Then, $\iota$ and $\rho$ are in $\Hom((\sA_7,\br),(\sE_8,\br))$. 
		We have 
		\begin{align*}
			\{ W(\sE_8)_\br f(\sA_7)  :  f \in \Hom((\sA_7,\br),(\sE_8,\br)) \} 
			&\supset \{ W(\sE_8)_\br \iota(\sA_7), W(\sE_8)_\br \rho(\sA_7) \} \\
			&\supset \{ W(\sE_8)_\br \sA_7, W(\sE_8)_\br \sA_7' \}.
		\end{align*}
		By Lemma~\ref{lem:A}, we have
		\begin{align*}
			|\{ W(\sE_8)_\br \sA_7, W(\sE_8)_\br \sA_7' \}|=2.
		\end{align*}
		Hence, by Lemma~\ref{lem:base} and Theorem~\ref{thm:Oshima}, we have
		\begin{align*}
			| \{ W(\sE_8)_\br f(\sA_7)  :  f \in \Hom((\sA_7,\br),(\sE_8,\br)) \} | = 2.
		\end{align*}
		Therefore, the desired result follows.
	\end{proof}

\subsection{Isometries from $\sD_8$ to $\sE_8$}
    In this subsection, our goal is to prove Lemma~\ref{lem:subD8E8}, which concerns the isometries from the lattice $\sD_8$ into $\sE_8$.
    We begin by defining the following mappings.
    \begin{definition}
		Let $\iota$ be the inclusion from $\sD_8$ to $\sE_8$.	
		For each $k \in \{1,\ldots, 8 \}$, we define $\rho_k : \sD_8 \to \sD_8$ as
		\begin{align*}
			\rho_k \left( \sum_{i =1}^8 a_i\be_i \right) := -a_k \be_k + \sum_{i \in \{1,\ldots,8 \} \setminus \{k\}} a_i\be_i.
		\end{align*}
    \end{definition}	 
    Next, we explicitly describe the two isometries of $\sD_8$ into $\sE_8$ up to the action of $W(\sD_8)$.
    \begin{lemma} \label{lem:D}
		For each $k \in \{1,\ldots, 8 \}$, 
		$
            W(\sE_8) \backslash \Hom(\sD_8,\sE_8) = \{ W(\sE_8) \iota,  W(\sE_8)( \iota \circ \rho_k ) \}.
		$
    \end{lemma}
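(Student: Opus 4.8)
The plan is to apply Theorem~\ref{thm:Oshima}, which gives $|W(\sE_8)\backslash\Hom(\sD_8,\sE_8)|=2$, and then to exhibit two elements of $\Hom(\sD_8,\sE_8)$ whose $W(\sE_8)$-orbits are distinct; once that is done, those two orbits must exhaust $W(\sE_8)\backslash\Hom(\sD_8,\sE_8)$, which is exactly the assertion. The two candidates are $\iota$ and $\iota\circ\rho_k$. First I would check that both lie in $\Hom(\sD_8,\sE_8)$: this is immediate for $\iota$, and for $\iota\circ\rho_k$ it suffices to note that $\rho_k$, which negates the $k$-th coordinate, is an automorphism of $\sD_8$, since negating a coordinate changes the coordinate sum by an even integer and hence preserves the defining condition $(\bv,\bj)\in 2\Z$.

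The substance of the proof is showing $W(\sE_8)\iota \neq W(\sE_8)(\iota\circ\rho_k)$. I would stress that, unlike in Lemma~\ref{lem:A}, one cannot separate the orbits by the orthogonal complement of the image: since $\rho_k(\sD_8)=\sD_8$, the maps $\iota$ and $\iota\circ\rho_k$ have the same image $\sD_8\subset\sE_8$. So I would compare the maps themselves. Suppose, for contradiction, that $w\in W(\sE_8)$ satisfies $w\circ\iota = \iota\circ\rho_k$. Then $w$ and the $\R$-linear extension of $\rho_k$ agree on $\sD_8$, and $\sD_8$ spans $\R^8$, so $w=\rho_k$ as linear transformations of $\R^8$. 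But $W(\sE_8)\subseteq\Aut(\sE_8)$, so $\rho_k$ would map $\sE_8$ onto itself; this fails because $\rho_k(\bj/2)=\bj/2-\be_k$ has a non-integral entry, hence is not in $\sD_8$, and is not in $\bj/2+\sD_8$ either since $\be_k\notin\sD_8$, so $\bj/2-\be_k\notin\sE_8$, a contradiction. Therefore the two orbits are distinct.

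Putting these together, $W(\sE_8)\iota$ and $W(\sE_8)(\iota\circ\rho_k)$ are two distinct members of the two-element set $W(\sE_8)\backslash\Hom(\sD_8,\sE_8)$, hence all of it, which proves the lemma. The main (and essentially only) obstacle is recognising that the image-based invariant used for $\sA_7$ is of no help here and that one must instead exploit the rigidity of linear maps on the spanning set $\sD_8$ to force $w=\rho_k$ itself; after that, the verification $\rho_k\notin\Aut(\sE_8)$ is a one-line computation.
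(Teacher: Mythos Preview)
Your proposal is correct and follows essentially the same argument as the paper: invoke Theorem~\ref{thm:Oshima} to reduce to showing $W(\sE_8)\iota\neq W(\sE_8)(\iota\circ\rho_k)$, suppose $w\circ\iota=\iota\circ\rho_k$, deduce that $w$ agrees with $\rho_k$ as a linear map on $\R^8$ (since $\sD_8$ spans), and reach a contradiction via $w(\bj/2)=\bj/2-\be_k\notin\sE_8$. Your extra remarks about why the image-based invariant from Lemma~\ref{lem:A} is unavailable here are apt but not needed for the proof itself.
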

	\begin{proof}
		Since Theorem~\ref{thm:Oshima} asserts $|W(\sE_8) \backslash \Hom(\sD_8,\sE_8)|=2$, it suffices to verify 
		\begin{align}	\label{ex:D:1}
			W(\sE_8) \iota \neq W(\sE_8)( \iota \circ \rho_k ).
		\end{align}
			By way of contradiction, we assume that there exists $w \in W(\sE_8)$ such that $w \circ \iota = \iota \circ \rho_k$.
		Since $\rank \sE_8 = \rank \sD_8$, we see that if $\sum_{i =1}^8 a_i\be_i \in \sE_8$, then
		\begin{align*}
			w \left( \sum_{i =1}^8 a_i\be_i \right) = -a_k \be_k + \sum_{i \in \{1,\ldots,8 \} \setminus \{k\}} a_i\be_i.
		\end{align*}
			Although $\bj/2 \in \sE_8$, we obtain $w ( \bj/2 ) = \bj/2 - \be_k \not\in \sE_8$.
		This is a contradiction, and hence~\eqref{ex:D:1} follows.
	\end{proof}
	
	From Lemmas~\ref{lem:base} and~\ref{lem:D}, we obtain the following lemma.
	\begin{lemma}	\label{lem:subD8E8}
		Let $\br = \be_7+\be_8$. 
		Then,
        $
            W(\sE_8)_{\br} \backslash \Hom((\sD_8,\br),(\sE_8,\br)) = \{W(\sE_8)_{\br} \iota, W(\sE_8)_{\br}(\iota \circ \rho_k)\}
        $
		for $k=1,\ldots,6$.
	\end{lemma}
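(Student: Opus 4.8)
The plan is to transfer the classification of $W(\sE_8)\backslash\Hom(\sD_8,\sE_8)$ obtained in Lemma~\ref{lem:D} to the pointed setting using the bijection of Lemma~\ref{lem:base}. The vector $\br = \be_7+\be_8$ has norm $2$ and lies in $\sD_8$ (its coordinate sum is $2$), and $\sD_8 \subset \sE_8$, so $\br$ is a root of both lattices. Hence Lemma~\ref{lem:base}, applied with $(\sL,\bv) = (\sD_8,\br)$ and $(\sM,\bu) = (\sE_8,\br)$, provides a bijection
\begin{align*}
	W(\sE_8)_\br\backslash\Hom((\sD_8,\br),(\sE_8,\br)) \longrightarrow W(\sE_8)\backslash\Hom(\sD_8,\sE_8), \qquad W(\sE_8)_\br f \mapsto W(\sE_8)f .
\end{align*}

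Next I would check that $\iota$ and $\iota\circ\rho_k$ lie in $\Hom((\sD_8,\br),(\sE_8,\br))$ whenever $k\in\{1,\ldots,6\}$; this is the only place the restriction on $k$ enters. Indeed $\iota(\br)=\br$ trivially, and $\rho_k$ negates the $k$-th coordinate, which vanishes in $\br=\be_7+\be_8$ as soon as $k\le 6$, so $(\iota\circ\rho_k)(\br)=\br$ too. Therefore $W(\sE_8)_\br\iota$ and $W(\sE_8)_\br(\iota\circ\rho_k)$ are genuine elements of the left-hand side, and the displayed bijection carries them to $W(\sE_8)\iota$ and $W(\sE_8)(\iota\circ\rho_k)$ respectively.

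Finally I would invoke Lemma~\ref{lem:D}, which says the right-hand side is exactly $\{W(\sE_8)\iota,\,W(\sE_8)(\iota\circ\rho_k)\}$ and that these two cosets are distinct, so it has cardinality $2$. Since the map above is a bijection, its domain also has cardinality $2$; as $W(\sE_8)_\br\iota$ and $W(\sE_8)_\br(\iota\circ\rho_k)$ are two of its elements and are distinct (an injective map cannot identify them, since their images are distinct), they exhaust the domain, giving the asserted equality. There is no real obstacle here: all the substance was already packaged into Lemmas~\ref{lem:base} and~\ref{lem:D} together with Theorem~\ref{thm:Oshima}; the only point deserving a moment's attention is that $\rho_k$ fixes $\br$ precisely for $k\in\{1,\ldots,6\}$, which is why the statement is restricted to those indices.
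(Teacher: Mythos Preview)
Your proof is correct and follows exactly the approach indicated in the paper, which simply states that the lemma is obtained from Lemmas~\ref{lem:base} and~\ref{lem:D}. Your additional observation that $\rho_k$ fixes $\br=\be_7+\be_8$ precisely when $k\in\{1,\ldots,6\}$ is the only detail needed beyond those two lemmas, and you supply it correctly.
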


\section{Proof of the second main result}
    \label{sec:relations}
    In this section, we prove the second main result, Theorem~\ref{thm:sym} and Corollary~\ref{cor:sym} by using the mapping $\phi_{\br}^{(n)}$ in the following definition. 

    \begin{definition}
        Let $n$ be a non-negative integer, and $\br$ be a root of $\sE_8$.
        We define
        \begin{align*}
            \begin{array}{rccc}
                \phi_{\br}^{(n)} : & W(\sE_8)_\br \backslash \binom{N_{\br}(\sE_8)/\sim_\br}{n} & \to & \Omega_{n}/{\sim_{\sw}} \\
                & W(\sE_8)_\br\{ [\bu_1]_\br,\ldots, [\bu_n]_\br \} & \mapsto & [S(G)]
            \end{array}
        \end{align*}
        where $A(G)+2I = B^\top B$ with $B = [\bu_1,\ldots,\bu_n]$.
        For short, we set $\phi^{(n)} := \phi_{\br}^{(n)}$.
    \end{definition}
    By Theorem~\ref{thm:Cao} and Lemma~\ref{lem:vectors-to-graphs}, we see that the mapping $\phi^{(n)}$ is well-defined.
    First, we provide two lemmas to determine some fibre of the mapping $\phi^{(n)}$.
    Also, in these lemmas, note that $f( \{ [\bv_1]_{\bv_0},\ldots, [\bv_n]_{\bv_0} \} )$ is naturally defined by $\{ [f(\bv_1)]_{f(\bv_0)},\ldots, [f(\bv_n)]_{f(\bv_0)} \}$.

    \begin{lemma}	\label{lem:A7E8}
    	Let $\br$ be a root of $\sE_8$.
    	Let $\bv_1,\ldots,\bv_n,\bv_0$ be roots.
	Assume that there exists a graph $G$ such that $A(\hat{G})+2I = \tB^\top \tB$, where $\tB = [\bv_1,\ldots,\bv_n,\bv_0]$.
    	Here, the vector $\bv_0$ corresponds to the apex for $G$.
    	If $\sL := \langle \bv_1,\ldots,\bv_n,\bv_0 \rangle$ is isometric to $\sA_7$,
    	then 	$G$ is switching equivalent to $K_6$, and
	\begin{align}	\label{lem:A7E8:1}
	    \left|  \left\{
                W(\sE_8)_\br f( \{ [\bv_1]_{\bv_0},\ldots, [\bv_n]_{\bv_0} \} ) 
                 : 
                f \in \Hom((\sL,\bv_0),(\sE_8,\br))
            \right\} \right| = 2.
        \end{align}
    \end{lemma}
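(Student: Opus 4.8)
The plan is to use the classification of embeddings of $\sA_7$ into $\sE_8$ from the previous section together with Lemma~\ref{lem:W(A)N}, which tells us exactly what the set $N_{\bv_0}^{(n)}(\sA_7)$ looks like. First I would observe that since $\sL=\langle \bv_1,\ldots,\bv_n,\bv_0\rangle$ is isometric to $\sA_7$ and the $\bv_i$ all lie in $N_{\bv_0}(\sL)$ with $\sL=\langle\bv_1,\ldots,\bv_n,\bv_0\rangle$, the set $\{[\bv_1]_{\bv_0},\ldots,[\bv_n]_{\bv_0}\}$ belongs to $N_{\bv_0}^{(n)}(\sL)$. By Lemma~\ref{lem:W(A)N} (transported along an isometry $\sL\cong\sA_7$, which carries the root $\bv_0$ to $\be_m-\be_{m+1}$ after applying Lemma~\ref{lem:transitive}), this forces $n=6$ and the set to be $W(\sL)_{\bv_0}\{[\be_i-\be_{m+1}]_{\bv_0}:i=1,\ldots,6\}$ up to the Weyl group action. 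Consequently the Gram matrix $A(G)+2I$ of $\bv_1,\ldots,\bv_6$ is, up to switching, the Gram matrix of $\be_1-\be_8,\ldots,\be_6-\be_8$, which is $A(K_6)+2I$; hence $G$ is switching equivalent to $K_6$. This is the content of Lemma~\ref{lem:S(A)} applied with $\sL=\sA_7$, so I would just cite that.

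For the cardinality statement~\eqref{lem:A7E8:1}, I would rewrite the set whose size we want in terms of lattices. The key point is that the Weyl-group orbit of the image set $f(\{[\bv_1]_{\bv_0},\ldots,[\bv_n]_{\bv_0}\})$ is determined by the Weyl-group orbit of the pair consisting of the sublattice $f(\sL)$ and the root $f(\bv_0)=\br$: indeed, by Lemma~\ref{lem:W(A)N} the set $N_{\br}(\sA_7)/\sim_\br$ is \emph{the unique} element of $N_{\br}^{(6)}(f(\sL))$ (here $f(\sL)$ plays the role of $\sA_m$ and $\br$ the role of $\be_m-\be_{m+1}$), so once $f(\sL)$ and $\br$ are fixed the set $f(\{[\bv_1]_{\bv_0},\ldots,[\bv_6]_{\bv_0}\})=N_{\br}(f(\sL))/\sim_\br$ is forced. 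Therefore the map
\[
	W(\sE_8)_\br f(\{[\bv_1]_{\bv_0},\ldots,[\bv_6]_{\bv_0}\}) \longmapsto W(\sE_8)_\br f(\sL)
\]
from the left-hand set of~\eqref{lem:A7E8:1} to $\{W(\sE_8)_\br f(\sL):f\in\Hom((\sL,\bv_0),(\sE_8,\br))\}$ is a bijection. Hence the cardinality in~\eqref{lem:A7E8:1} equals $|\{W(\sE_8)_\br f(\sL):f\in\Hom((\sL,\bv_0),(\sE_8,\br))\}|$.

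Finally, choosing (via Lemma~\ref{lem:transitive}) a concrete isometry identifying $\sL$ with $\sA_7$ so that $\bv_0$ corresponds to $\be_7-\be_8$, this last cardinality is exactly $|\{W(\sE_8)_{\be_7-\be_8} g(\sA_7):g\in\Hom((\sA_7,\be_7-\be_8),(\sE_8,\be_7-\be_8))\}|$, which is $2$ by Lemma~\ref{lem:subA7E8}. This gives~\eqref{lem:A7E8:1}.

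I expect the main obstacle to be the bookkeeping in the second paragraph: carefully justifying that the assignment $f\mapsto f(\sL)$ on orbits is well-defined and injective, i.e.\ that two isometries $f,f'\in\Hom((\sL,\bv_0),(\sE_8,\br))$ with $W(\sE_8)_\br f(\sL)=W(\sE_8)_\br f'(\sL)$ necessarily have $W(\sE_8)_\br f(\{[\bv_i]_{\bv_0}\})=W(\sE_8)_\br f'(\{[\bv_i]_{\bv_0}\})$. This uses that $W(\sE_8)_\br$ permutes the elements of $N_{\br}^{(6)}(\sublat)$ compatibly, combined with the uniqueness in Lemma~\ref{lem:W(A)N}; once that is set up, surjectivity is immediate and the rest is just citing Lemmas~\ref{lem:subA7E8} and~\ref{lem:S(A)}.
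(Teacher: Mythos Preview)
Your proof is correct and follows essentially the same route as the paper: reduce via Lemma~\ref{lem:transitive} to $\sL=\sA_7\subset\sE_8$ with $\bv_0=\br=\be_7-\be_8$, apply Lemma~\ref{lem:W(A)N} to force $n=6$ and $\cV=\{[\bv_i]_{\bv_0}\}=N_\br(\sA_7)/{\sim_\br}$ (hence $G$ is switching equivalent to $K_6$), and then invoke Lemma~\ref{lem:subA7E8} for the cardinality. The only difference is in how the count of $2$ is pinned down: you build a direct bijection $W(\sE_8)_\br f(\cV)\leftrightarrow W(\sE_8)_\br f(\sL)$, using that $N_\br^{(6)}(f(\sL))$ is a singleton to get injectivity; the paper instead uses a squeeze, bounding the cardinality above by $|W(\sE_8)_\br\backslash\Hom((\sL,\bv_0),(\sE_8,\br))|=2$ (Theorem~\ref{thm:Oshima} plus Lemma~\ref{lem:base}) and below by $|\{W(\sE_8)_\br f(\sL):f\in\Hom((\sL,\bv_0),(\sE_8,\br))\}|=2$ (Lemma~\ref{lem:subA7E8}). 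The squeeze avoids precisely the injectivity bookkeeping you flag as the main obstacle, so it is a touch cleaner, but both arguments are sound and rely on the same ingredients.
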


    \begin{proof}
        Without loss of generality, we may assume that $\sL = \sA_7 \subset \sE_8$ and $\bv_0 = \br=\be_7-\be_8$ by Lemma~\ref{lem:transitive}.
        Set $$\cV := \{ [\bv_1]_{\bv_0},\ldots, [\bv_n]_{\bv_0} \}$$ for short.
    	Since $\bv_1,\ldots,\bv_n,\br=\bv_0$ generates $\sA_7$, we see that $\cV \in N_\br^{(n)}(\sA_7)$.
	By Lemma~\ref{lem:W(A)N}, we have $n=6$ and take $g \in W(\sA_7)_\br$ such that
	\begin{align*}
		g(\cV) = \{ [\be_1-\be_8]_\br,\ldots,[\be_6-\be_8]_\br \}.
	\end{align*}
	Hence, $G$ is switching equivalent to $K_6$.

	Next we prove~\eqref{lem:A7E8:1}.
        Theorem~\ref{thm:Oshima} together with Lemma~\ref{lem:base} implies
        \begin{align*}
            \left|  \left\{
                W(\sE_8)_\br f( \cV ) 
                 : 
                f \in \Hom((\sL,\bv_0),(\sE_8,\br))
            \right\} \right| 
            &\leq 
            |W(\sE_8)_\br \backslash \Hom((\sL,\bv_0),(\sE_8,\br))| \\
            &=
            |W(\sE_8) \backslash \Hom(\sL,\sE_8)| 
            =2.
        \end{align*}
        Also, by Lemma~\ref{lem:subA7E8},
        we have
        \begin{align*}
            \left|  \left\{
                W(\sE_8)_\br f( \cV ) 
                 : 
                f \in \Hom((\sL,\bv_0),(\sE_8,\br))
            \right\} \right|
            \geq
            \left|  \left\{
                W(\sE_8)_\br f( \sL ) 
                 : 
                f \in \Hom((\sL,\bv_0),(\sE_8,\br))
            \right\} \right|
            = 2.
        \end{align*}
        These inequalities imply the desired result~\eqref{lem:A7E8:1}.
    \end{proof}

    \begin{lemma}	\label{lem:D8E8}
        Let $\br$ be a root of $\sE_8$.
       Let $\bv_1,\ldots,\bv_n,\bv_0$ be roots.
	Assume that there exists a graph $G$ such that $A(\hat{G})+2I = \tB^\top \tB$, where $\tB = [\bv_1,\ldots,\bv_n,\bv_0].$
        Here, the vector $\bv_0$ corresponds to the apex for $G$.
        If $\sL := \langle \bv_1,\ldots,\bv_n,\bv_0 \rangle$ is isometric to $\sD_8$,
        then
        \begin{align*}
            \left|  \left\{
                W(\sE_8)_\br f( \{ [\bv_1]_{\bv_0},\ldots, [\bv_n]_{\bv_0} \} ) 
                 : 
                f \in \Hom((\sL,\bv_0),(\sE_8,\br))
            \right\} \right| = 1.
        \end{align*}
    \end{lemma}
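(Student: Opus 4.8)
The plan is to follow the proof of Lemma~\ref{lem:A7E8} closely; the one difference is that for $\sD_8$ the two isometry classes of embeddings into $\sE_8$ will be shown to yield the \emph{same} orbit, so the cardinality drops from $2$ to $1$. By Lemma~\ref{lem:transitive} we may assume without loss of generality that $\sL = \sD_8 \subset \sE_8$ and $\bv_0 = \br = \be_7 + \be_8$. Writing $\cV := \{ [\bv_1]_{\bv_0}, \ldots, [\bv_n]_{\bv_0} \}$, the hypothesis that $\bv_1, \ldots, \bv_n, \br = \bv_0$ generate $\sD_8$ gives $\cV \in N_\br^{(n)}(\sD_8)$. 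Applying Lemma~\ref{lem:W(D)N} with $m = 8$, we obtain $7 \le n \le 12$ together with an element $g \in W(\sD_8)_\br \subseteq W(\sE_8)_\br$ with $g(\cV) = \cV_0$, where
\[
	\cV_0 := \{ [\be_8 + \be_i]_\br : i = 1, \ldots, 6 \} \sqcup \{ [\be_8 - \be_i]_\br : i = 1, \ldots, n - 6 \}.
\]
Replacing each $f$ by $f \circ g$ (still an element of $\Hom((\sD_8,\br),(\sE_8,\br))$, and satisfying $(f\circ g)(\cV) = f(\cV_0)$) shows that the set to be computed is unchanged if $\cV$ is replaced by $\cV_0$, so we assume $\cV = \cV_0$.

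Next I would invoke Lemma~\ref{lem:subD8E8}: the group $W(\sE_8)_\br$ has exactly two orbits on $\Hom((\sD_8,\br),(\sE_8,\br))$, with representatives the inclusion $\iota$ and $\iota \circ \rho_1$. Since $(w \circ f)(\cV_0) = w(f(\cV_0))$ for every $w \in W(\sE_8)_\br$, the assignment $f \mapsto W(\sE_8)_\br f(\cV_0)$ is constant on each of these two orbits; hence the set in the statement is contained in $\{ W(\sE_8)_\br \iota(\cV_0),\, W(\sE_8)_\br (\iota \circ \rho_1)(\cV_0) \}$, and it is nonempty because $\iota \in \Hom((\sD_8,\br),(\sE_8,\br))$.

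The crux is to check that these two elements coincide, i.e.\ that $(\iota \circ \rho_1)(\cV_0) = \iota(\cV_0)$, equivalently $\rho_1(\cV_0) = \cV_0$. The automorphism $\rho_1$ negates the first coordinate, so it fixes $\br = \be_7 + \be_8$ and therefore acts on $N_\br(\sD_8)/\sim_\br$; explicitly it interchanges the two classes $[\be_8 + \be_1]_\br$ and $[\be_8 - \be_1]_\br$ and fixes every other class $[\be_8 \pm \be_i]_\br$ with $i \in \{2, \ldots, 6\}$. Because $n \ge 7$, i.e.\ $n - 6 \ge 1$, both $[\be_8 + \be_1]_\br$ and $[\be_8 - \be_1]_\br$ lie in $\cV_0$, so $\rho_1$ merely permutes the members of $\cV_0$; thus $\rho_1(\cV_0) = \cV_0$, and the set in the statement is a singleton.

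The only step I expect to require genuine care is the last one. In the $\sA_7$ situation of Lemma~\ref{lem:A7E8} the two embeddings are distinguished by an isometry invariant (the norm of a generator of the orthogonal complement in $\sE_8$), which is exactly what makes that count equal $2$; here, by contrast, one must verify that the extra diagram automorphism $\rho_1$ of $\sD_8$ acts trivially on the particular configuration $\cV_0$, and this relies both on the explicit description of $\cV_0$ coming from Lemma~\ref{lem:W(D)N} and on the inequality $n \ge 7$, which ensures that the flipped coordinate already occurs ``with both signs'' in $\cV_0$.
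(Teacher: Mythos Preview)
Your proof is correct and follows essentially the same approach as the paper's: reduce to the standard configuration $\cV_0$ via Lemma~\ref{lem:W(D)N}, use Lemma~\ref{lem:subD8E8} to cut the set down to $\{W(\sE_8)_\br\,\iota(\cV_0),\ W(\sE_8)_\br\,(\iota\circ\rho_1)(\cV_0)\}$, and then observe that $\rho_1$ merely swaps $[\be_8+\be_1]_\br$ with $[\be_8-\be_1]_\br$, both of which lie in $\cV_0$ because $n\ge 7$. Your write-up even makes explicit the role of the inequality $n\ge 7$, which the paper leaves implicit.
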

    \begin{proof}
        Without loss of generality, we may assume that $\sL=\sD_8$ and $\br = \bv_0 :=\be_7+\be_8$ by Lemma~\ref{lem:transitive}.
        Setting
        $\cV := \{ [\bv_1]_{\bv_0},\ldots, [\bv_n]_{\bv_0} \}$,
        we consider 
        \begin{align}	\label{lem:D8E8:0}
            \left\{
                W(\sE_8)_\br f( \cV ) 
                 : 
                f \in \Hom((\sL,\bv_0),(\sE_8,\br))
            \right\}.
        \end{align}
        Since $\cV \in N_{\br}^{(n)}(\sD_8)$, we have $7 \leq n \leq 12$ by Lemma~\ref{lem:W(D)N}.
	Set
	$
        	      \cV' := \{ [ \be_8+\be_i ]_\br : i \in \{1,\ldots,6\}\} \sqcup \{ [ \be_8-\be_i ]_\br : i \in \{1,\ldots,n-6\} \}.
        $
        Since $\cV' \in N_{\br}^{(n)}(\sD_8)$, we see by Lemma~\ref{lem:W(D)N} that $W(\sD_8)_{\br} \cV = W(\sD_8)_{\br} \cV'$.
        Hence, \eqref{lem:D8E8:0} equals
       \begin{align}	\label{lem:D8E8:1}
            \left\{ W(\sE_8)_\br f( \cV')  :  f \in \Hom((\sL,\bv_0),(\sE_8,\br))  \right\}.
        \end{align}
        By Lemma~\ref{lem:subD8E8} with $k=1$,
        \eqref{lem:D8E8:1} equals
        	\begin{align}	\label{lem:D8E8:2}
		\left\{
        		W(\sE_8)_\br \cV'  ,
                 W(\sE_8)_\br \rho_1 ( \cV' )
                 \right\}.
        \end{align}
        Since $\rho_1([\be_8+\be_1]_\br) = [\be_8-\be_1]_\br$ and $\rho_1([\be_8-\be_1]_\br)=[\be_8+\be_1]_\br$,
        we have $W(\sE_8)_\br \cV' = W(\sE_8)_\br \rho_1 ( \cV' )$.
        Hence, the set~\eqref{lem:D8E8:2} has exactly one element.
        This is the desired conclusion.
    \end{proof}
    
    The following is the second main result.
    \begin{theorem}	\label{thm:sym}
        The following hold.
        \begin{enumerate}
            \item For a non-negative integer $n \ne 6$, the mapping $\phi^{(n)}$ is bijective.
            \item The fibre of $[S(K_6)]$ under $\phi^{(6)}$ has exactly two elements,
        and the other fibres have only one element.
        \end{enumerate}
    \end{theorem}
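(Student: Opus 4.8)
The strategy is to analyze the fibres of $\phi^{(n)}$ by decomposing the domain $W(\sE_8)_\br \backslash \binom{N_{\br}(\sE_8)/\sim_\br}{n}$ and the codomain $\Omega^{(n)}/\sim_{\sw}$ according to which irreducible root lattice $\sL \in \cE$ is generated by the relevant roots together with $\br$. First I would fix an element $W(\sE_8)_\br\{[\bu_1]_\br,\ldots,[\bu_n]_\br\}$ of the domain, set $\sL := \langle \bu_1,\ldots,\bu_n,\br\rangle$, and note that by Lemma~\ref{lem:irr} this is an irreducible root lattice; since it sits inside $\sE_8$, it lies in $\cE$. Conversely, given $[S(G)] \in \cS^{(n)}(\sL)/\sim_{\sw}$ for some $\sL \in \cE$, Lemma~\ref{lem:surj} (applied with the root $\br$ of $\sE_8$, using that $\sL$ embeds in $\sE_8$ and $W(\sE_8)$ is transitive on roots by Lemma~\ref{lem:transitive}) shows it lies in the image of $\phi^{(n)}$. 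So $\phi^{(n)}$ is surjective, and its fibres refine the decomposition $\Omega^{(n)}/\sim_{\sw} = \bigsqcup_{\sL \in \cE} \cS^{(n)}(\sL)/\sim_{\sw}$ from Lemma~\ref{lem:Seidel decomp}.

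The key reduction is then: for a fixed $\sL \in \cE$ and a fixed $[S(G)] \in \cS^{(n)}(\sL)/\sim_{\sw}$, the fibre $(\phi^{(n)})^{-1}([S(G)])$ is in bijection with the set
\[
\left\{ W(\sE_8)_\br\, f\!\left(\{[\bv_1]_{\bv_0},\ldots,[\bv_n]_{\bv_0}\}\right) \;\middle|\; f \in \Hom((\sL,\bv_0),(\sE_8,\br)) \right\},
\]
where $\bv_1,\ldots,\bv_n,\bv_0$ are a fixed generating set of $\sL$ with Gram matrix $A(\tilde G)+2I$ and $\bv_0$ is the apex. This is because, by Theorem~\ref{thm:Oshima} and the surjectivity of $\phi^{(n)}_{\sL,\br}$ together with the analysis of the map $\phi^{(n)}_{\sL,\br}$ in Section~\ref{sec:3}, every preimage of $[S(G)]$ arises by embedding $\sL$ into $\sE_8$ sending $\bv_0$ to $\br$, and two such embeddings give the same element of the domain of $\phi^{(n)}$ iff they agree modulo $W(\sE_8)_\br$ on the configuration $\{[\bv_i]_{\bv_0}\}$. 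Now I would case on $\sL$: if $\sL \notin \{\sA_7,\sD_8\}$ then $|W(\sE_8)\backslash\Hom(\sL,\sE_8)| = 1$ by Theorem~\ref{thm:Oshima}, so via Lemma~\ref{lem:base} the above set has exactly one element and the fibre is a singleton; if $\sL \cong \sD_8$, Lemma~\ref{lem:D8E8} gives that the set has exactly one element, so again the fibre is a singleton; if $\sL \cong \sA_7$, Lemma~\ref{lem:A7E8} gives that $G$ is switching equivalent to $K_6$ (forcing $n = 6$) and that the set has exactly two elements, so the fibre of $[S(K_6)]$ has two elements.

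Assembling this: for $n \neq 6$, no $\sL \cong \sA_7$ can occur among the lattices generated by configurations in $N_\br^{(n)}(\sL)$ since $\sA_7$ has rank $8$ and, as Lemma~\ref{lem:A7E8} shows, forces the underlying graph to be (switching equivalent to) $K_6$, i.e. $n=6$; hence for $n \neq 6$ every relevant $\sL$ falls in the "singleton fibre" cases above, and $\phi^{(n)}$ is a bijection. For $n = 6$, the only lattice yielding a non-singleton fibre is $\sL \cong \sA_7$, and by Lemma~\ref{lem:A7E8} the corresponding switching class is exactly $[S(K_6)]$ with a fibre of size $2$; all other $\sL$ (necessarily $\neq \sA_7$, and $\neq \sD_8$ since $\sD_8$ requires $n \geq 7$ by Lemma~\ref{lem:W(D)N}) give singleton fibres. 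This proves both parts. The main obstacle is the bookkeeping in the key reduction — carefully verifying that the fibre of $\phi^{(n)}$ over $[S(G)]$ is genuinely in bijection with the displayed set of $W(\sE_8)_\br$-orbits of embedded configurations, rather than something coarser or finer; this requires combining the well-definedness discussion of $\phi^{(n)}_{\sL,\br}$ (Remark~\ref{rmk:phi}), the surjectivity Lemma~\ref{lem:surj}, and Lemma~\ref{lem:base} to pass between $W(\sE_8)_\br$-orbits and $W(\sE_8)$-orbits of isometries, and checking that the map $\phi^{(n)}$ factors through $\phi^{(n)}_{\sL,\br}$ compatibly. Once that identification is in place, the three cases are immediate from the cited lemmas.
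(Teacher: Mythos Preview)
Your proposal is correct and follows essentially the same approach as the paper: both arguments identify the fibre $(\phi^{(n)})^{-1}([S(G)])$ with the set $\{ W(\sE_8)_\br\, f(\{[\bv_i]_{\bv_0}\}) \mid f \in \Hom((\sL,\bv_0),(\sE_8,\br)) \}$, and then case on the isomorphism type of $\sL$ using Theorem~\ref{thm:Oshima} with Lemma~\ref{lem:base} for $\sL \notin \{\sA_7,\sD_8\}$, Lemma~\ref{lem:D8E8} for $\sL \cong \sD_8$, and Lemma~\ref{lem:A7E8} for $\sL \cong \sA_7$. The paper states the fibre identification somewhat tersely (``This can be written as\ldots''), whereas you correctly flag the bookkeeping there as the main point requiring care.
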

    \begin{proof}
        Fix $[S] \in \Omega_{n}/{\sim_{\sw}}$.
        We consider the fibre of $[S]$ under $\phi^{(n)}$.
        By Lemma~\ref{lem:E} together with $\rank(3I-S) \leq 7$, we have $S \in \cS_{n}(\sL)$ for some $\sL \in \cL$ with $\rank(\sL) \leq 8$.
       Hence, we can take a graph $G$ with $S=S(G)$ satisfying $A(\hat{G})+2I = \tB^\top \tB$ for generators $\bv_1,\ldots,\bv_n,\bv_0$ of $\sL$, where $\tB = [\bv_1,\ldots,\bv_n, \bv_0 ]$.
        Here, $\bv_0$ corresponds to the apex for $G$.
%
        The fibre $(\phi^{(n)})^{-1}([S(G)])$ is
        \begin{align*}
            \left\{ W(\sE_8)_\br \{ [\bu_1]_\br,\ldots, [\bu_n]_\br \} \in W(\sE_8)_\br \backslash \binom{N_{\br}(\sE_8)/\sim_\br}{n} :
            \phi^{(n)}(W(\sE_8)_\br\{ [\bu_1]_\br,\ldots, [\bu_n]_\br \}) = [S]
            \right\}.
        \end{align*}
        This can be written as
        \begin{align*}
            \left\{ W(\sE_8)_\br f( \{ [\bv_1]_{\bv_0},\ldots, [\bv_n]_{\bv_0} \} )  : 
            f \in \Hom((\sL,\bv_0),(\sE_8,\br)) 
            \right\}.
        \end{align*}
        Hence, if $\sL \neq \sA_7, \sD_8$, then this fibre has only one element since
        \begin{align*}
            |W(\sE_8)_\br \backslash \Hom((\sL,\bv_0),(\sE_8,\br)) | = 1
        \end{align*}
        by Lemma~\ref{lem:base} and Theorem~\ref{thm:Oshima}.
        Next if $\sL$ is isometric to $\sD_8$, then the fibre has only one element by Lemma~\ref{lem:D8E8}.
        If $\sL$ is isometric to $\sA_7$, then $G$ is switching equivalent to $K_6$ and the fibre has two elements by Lemma~\ref{lem:A7E8}.
    \end{proof}

	Finally, we prove Corollary~\ref{cor:sym}.  
	This proof shows that the almost symmetry of the numbers $\omega(n)$ arises from the operation of taking complements in some set.
    \begin{proof}[Proof of Corollary~\ref{cor:sym}]
    	Recall that $\omega(n)=|\Omega_{n}/{\sim_{\sw}}|$.
	Let $\br$ be a root of $\sE_8$.
    	For a non-negative integer $n$ at most $28$, the mapping
    	\begin{align*}
    	    \begin{array}{cccc}
    	        c^{(n)} : &\binom{N_{\br}(\sE_8)/\sim_\br}{n} & \to & \binom{N_{\br}(\sE_8)/\sim_\br}{28-n} \\
    	        &X & \mapsto & (N_{\br}(\sE_8)/\sim_\br) \setminus X
    	    \end{array}
    	\end{align*}
    	is bijective.
	The mapping $c^{(n)}$ induces the bijective mapping
    	\begin{align*}
    	    \begin{array}{cccc}
    	        \bar{c}^{(n)}:&W(\sE_8)_\br \backslash  \binom{N_{\br}(\sE_8)/\sim_\br}{n} & \to &W(\sE_8)_\br \backslash  \binom{N_{\br}(\sE_8)/\sim_\br}{28-n} \\
	        & W(\sE_8)_\br X & \mapsto & W(\sE_8)_\br \left( (N_{\br}(\sE_8)/\sim_\br) \setminus X \right)
    	    \end{array}.
    	\end{align*}
	Fix $n \in \{ 0,\ldots,14\}$.
	Since $\phi^{(28-n)}$ is bijective by Theorem~\ref{thm:sym},
	we obtain the composite mapping
    	\begin{align*} 
		 \Omega_{28-n}/{\sim_{\sw}}
		 \xrightarrow{(\phi^{(28-n)})^{-1}}
		 W(\sE_8)_\br \backslash \binom{N_{\br}(\sE_8)/\sim_\br}{28-n}
		 \xrightarrow{\bar{c}^{(28-n)}}  W(\sE_8)_\br \backslash \binom{N_{\br}(\sE_8)/\sim_\br}{n}
		 \xrightarrow{\phi^{(n)}}
		 \Omega_{n}/{\sim_{\sw}}.
	\end{align*}
	If $n \neq 6$, then this mapping is bijective by Theorem~\ref{thm:sym},
	and hence $|\Omega_{28-n}/{\sim_{\sw}}|=|\Omega_{n}/{\sim_{\sw}}|$.
	Also, in the case of $n=6$, we see by Theorem~\ref{thm:sym} that the fibre of $[S(K_6)]$ under the mapping $\phi^{(n)} \circ \bar{c}^{(28-n)} \circ (\phi^{(28-n)})^{-1}$ has exactly two elements,
        and the other fibres have only one element.
	Hence, $|\Omega_{22}/{\sim_{\sw}}|=|\Omega_{6}/{\sim_{\sw}}|+1$ follows as desired.	
    \end{proof}

\section*{Acknowledgements}
\indent
The second main result is part of my doctoral dissertation.  
I am grateful to my advisor, Professor Munemasa, for his invaluable guidance and support.  
I also thank Masaaki Harada, Hiroki Shimakura, and Hajime Tanaka for their helpful comments.  
\bibliographystyle{plain}
\bibliography{references}

\end{document}